\title[Extension-lifting Bijections for Oriented Matroids]{Extension-lifting Bijections for Oriented Matroids}
\author{Spencer Backman, Francisco Santos, Chi Ho Yuen}
\date{\today}  % Activate to display a given date or no date
\numberwithin{equation}{section}
\theoremstyle{definition}
\newcounter{mainTheorem}
\newtheorem{theorem}{Theorem}[section]
\newtheorem{maintheorem}[mainTheorem]{Theorem}
\newtheorem{lemma}[theorem]{Lemma}
\newtheorem{definition}[theorem]{Definition}
\newtheorem{corollary}[theorem]{Corollary}
\newtheorem{remark}[theorem]{Remark}
\newtheorem{proposition}[theorem]{Proposition}
\newcommand{\Bcal}{{\mathcal B}}
\newcommand{\Ccal}{{\mathcal C}}
\newcommand{\Ecal}{{\mathcal E}}
\newcommand{\Ocal}{{\mathcal O}}
\newcommand{\Pcal}{{\mathcal P}}
\newcommand{\GLV}{\mathfrak{glv}}
\newcommand{\conv}{\operatorname{conv}}
\DeclareRobustCommand{\rchi}{{\mathpalette\irchi\relax}}
\newcommand{\irchi}[2]{\raisebox{\depth}{$#1\chi$}}
\newcommand{\paco}[1]{\todo[color=orange!30]{\footnotesize \rm #1 \\ \hfill --- P.}}
\newcommand{\Paco}[1]{\todo[size=\tiny,inline,color=orange!30]{#1 \\ \hfill --- P.}}
\begin{document}
\begin{abstract}
Extending the notion of geometric bijections for regular matroids, introduced by the first and third author with Matthew Baker, we describe a family of bijections between bases of an oriented matroid and special orientations.
These bijections are specified by a pair of circuit and cocircuit signatures coming respectively from a generic single-element lifting and extension.
We then characterize generic single-element liftings and extensions using these bijections.

%a family of bijections between bases of a regular matroid and the Jacobian group of the matroid was given. The core of the work is a geometric construction using zonotopal tilings that produces bijections between the bases of a realizable oriented matroid and the set of {\em $(\sigma,\sigma^*)$-compatible orientations} with respect to some {\em acyclic} circuit (respectively, cocircuit) signature $\sigma$ (respectively, $\sigma^*$). In this work, we extend this construction to general oriented matroids and circuit (respectively, cocircuit) signatures coming from generic single-element liftings (respectively, extensions). 

We also explain the relation of our work with the works of Gioan--Las Vergnas and Ding. Some implications in oriented matroid programming and oriented matroid triangulations are also discussed.
\end{abstract}

\maketitle

\tableofcontents

%\Chiho{Some convention to check: circuit vs signed circuit, $M$ vs $\underline{M}$, $M\setminus e$ vs $E\setminus\{e\}$, various notions of signatures}

\section{Introduction}

The number of spanning trees of a graph is an important numerical invariant that often enumerates other objects associated to the graph.
There is a long tradition of providing bijective proofs of such equinumerosity, e.g., proving Cayley's formula by Pr\"{u}fer sequences.
A particularly active direction in recent years is to construct bijections between spanning trees and special orientations or classes of orientations of the graph, a topic that has connections with the {\em chip-firing model} on graphs and their generalizations \cite{Bac_RR,Bac_PO}: there is a finite abelian group associated to the graph, known as {\em Jacobian} (also {\em critical group} or {\em sandpile group} in the literature), which can be defined using chip-firing and whose size is the number of spanning trees; the group has a canonical simply transitive action on these special orientations/orientation classes, so the bijections allow us to construct a simply transitive action of the Jacobian on the collection of spanning trees via composition.

A family of bijections, called 
``geometric bijections'' for their relation with polyhedral geometry (zonotopal tilings), was proposed by the first and third author together with Matthew Baker in \cite{BBY}. These bijections are particular well-behaved with respect to the said group action (see, for example, \cite[Section~5.6]{YCH_thesis} and \cite{DMTY}), and they naturally extend to {\em regular matroids}, where spanning trees are replaced by bases.
Since then, there have been several follow-up works along this direction, such as the work of McDonough which extended the tiling construction to (representable) {\em oriented arithmetic matroids} \cite{AM_GB}, and the works of Ding, including \cite{Ding_GB} which extended the bijections to other classes of subgraphs and gave a purely combinatorial proof of  bijectivity, and \cite{Ding_GB2} which we elaborate more in Section~\ref{sec:connection} and~\ref{sec:lawrence}.

In this paper we extract the oriented matroid theoretic essence of these geometric bijections and extend them to all oriented matroids. This is our Theorem~\ref{thm:main}, which we give a preview here; see Section~\ref{sec:prelim} for  details and precise definitions.

\subsection{Main results and idea of proof} 

Let $M$ be an oriented matroid with ground set $E$. 
A (generic) \emph{circuit signature} $\sigma$ on $M$ 
is a way to pick, for each circuit $\underline{C}$ of the underlying matroid of $M$, one of the two signed circuits of $M$ supported on $\underline{C}$.
% \spencer{I think it would helpful to explain how this language is unorthodox.}% 
%\chiho{Removed the footnote.}
%\footnote{The precise definition of a circuit signature is an antipodal map from signed circuits to $\{+,-,0\}$.
%The unorthodox usage here is equivalent to a signature (only) when it is {\em generic}.
%}
%{The usage here is unorthodox and only works when the lifting is generic, the precise definition will be given in Section~\ref{sec:prelim}. }
As an example, any (generic) {\em single-element lifting} of $M$, i.e., any oriented matroid $\widetilde{M}$ such that $M=\widetilde{M}/g$ for some element $g$ only lying in spanning circuits of $\widetilde{M}$, 
induces a generic circuit signature.
 Dually, a generic {\em single-element extension} of $M$, i.e., an oriented matroid $M'$ such that $M=M'\setminus f$ for some element $f$ only lying in spanning cocircuits of $M'$, induces a generic {\em cocircuit signature} $\sigma^*$ that picks out a signed cocircuit supported on each cocircuit.

An {\em orientation} of $M$ is a map $\Ocal:E\rightarrow\{+,-\}$.
%$\Ocal$ is {\em conformal} with a signed circuit or cocircuit $C$ if $\Ocal(e)=C(e)$ for every $e\in\underline{C}$. 
One can equivalently speak  about a {\em reorientation} $_{-A}M$ of $M$ along a subset $A\subset E$ of elements. The two points of view are equivalent by letting $\Ocal(e) = -$ if and only if $e\in A$.
%; in this description $\Ocal$ is conformal with $C$ if $C^- = \underline{C} \cap A$. A circuit or cocircuit $C\subset E$ is {\em compatible} with $\Ocal$ if one of the signed versions of $C$ is conformal with $\Ocal$.
We say that an orientation $\Ocal$ is compatible with a generic circuit signature $\sigma$ if every signed circuit $C$ conformal with $\Ocal$ is the signed circuit picked out by $\sigma$ for $\underline{C}$; we similarly define the compatibility with a generic cocircuit signature $\sigma^*$.
When both things happen we say that $\Ocal$ is {\em $(\sigma,\sigma^*)$-compatible}.

Recall that for a basis $B$ of $M$ and an arbitrary element $e$, there is either a \emph{fundamental circuit} $C(B,e)$ (if $e\not\in B$) or \emph{fundamental cocircuit} $C^*(B,e)$ (if $e\in B$): the unique circuit contained in $B\cup e$, or the unique cocircuit disjoint with $B\setminus e$. This motivates the following way of inducing orientations of $M$:

\begin{definition}
\label{defi:beta}
    Let $M$ be an oriented matroid. Let $\sigma$ and $\sigma^*$ be a generic circuit and cocircuit signature of $M$, respectively. 

     Given a basis $B$, let $\Ocal(B)$ be the orientation of $M$ in which we orient each $e \not\in B$ according to its orientation in $\sigma(C(B,e))$ and each $e \in B$ according to its orientation in $\sigma^*(C^*(B,e))$. We denote
    $\beta_{\sigma,\sigma^*}$ the map 
    \[
    \begin{array}{ccc}
    \beta_{\sigma,\sigma^*} : \{\text{bases of }M\} &\longrightarrow &\{\text{orientations of }M\}\\
    B &\longmapsto &\Ocal(B).
    \end{array}
    \]
\end{definition}

Our main result in this paper is:

\begin{maintheorem} \label{thm:main}
Let $M$ be an oriented matroid, and let $\sigma$ and $\sigma^*$ be the generic circuit and cocircuit signatures induced from a generic single-element lifting and extension, respectively.
%Given a basis $B$, let $\Ocal(B)$ be the orientation of $M$ in which we orient each $e \not\in B$ according to its orientation in $\sigma(C(B,e))$ and each $e \in B$ according to its orientation in $\sigma^*(C^*(B,e))$. 
%
Then the map 
$
\beta_{\sigma,\sigma^*}:B \mapsto \Ocal(B)
$
is a bijection between the set of bases of $M$ and the set of $(\sigma,\sigma^*)$-compatible orientations of $M$.
\end{maintheorem}

It turns out that the bijections in Theorem~\ref{thm:main} are so closely related to liftings and extensions of oriented matroids that they characterize generic signatures coming from liftings or extensions. More precisely:

\begin{maintheorem} 
\label{thm:characterization}
Let $\sigma$ be a generic circuit signature of an oriented matroid $M$.
The following are equivalent:
\begin{enumerate}
\item $\sigma$ is the circuit signature induced by some single-element lifting of $M$.
\item $\beta_{\sigma,\sigma^*}$ is a bijection between bases of ${M}$ and $(\sigma,\sigma^*)$-compatible orientations, for every generic cocircuit signature $\sigma^*$ of $M$ induced by generic single-element extension.
\item $\beta_{\sigma,\sigma^*}$ maps bases of $M$ to $\sigma$-compatible orientations for every generic lexicographic cocircuit signature $\sigma^*$ of $M$.
%\chiho{To Paco: The proof below still works if we only say ``orientations compatible with $\sigma$'', right? A: YES. CHANGED STATEMENT}
\end{enumerate}
\end{maintheorem}

Let us remark that part (3), saying that  ``lexicographic signatures suffice'', has a similar flavor as \cite[Theorem~3.5]{Santos_book}.

\begin{remark} \label{rem:acyclic}
    As mentioned above, our work was originally motivated by \cite{BBY}, which proves Theorem~\ref{thm:main} for realizable oriented matroids. Let us explain this connection.
    
    Let $A$ be an $r\times n$ real matrix of full row-rank realizing $M$. An extension (respectively, lifting) of $A$ is an $(r+1)\times n$ matrix $\widetilde A$ (respectively, an $r\times (n+1)$ matrix $A'$) restricting to $A$ by deletion of its last row (respectively, column). Each such extension/lifting defines a circuit/cocircuit signature of $M$, and the signatures that can be obtained this way are called \emph{acyclic} in \cite{BBY}.
    Theorem 1.4.1 in \cite{BBY} is exactly Theorem~\ref{thm:main} for the case of acyclic signatures of a realized oriented matroid. The proof there uses (fine) zonotopal tilings
    of the zonotope of $A$ which, by the Bohne--Dress Theorem, are equivalent to (generic) liftings of $M$~\cite{Bohne_thesis, RZ_BD_thm}. 
\end{remark}

\medskip

Our proof of Theorem~\ref{thm:main} uses the theory of {\em oriented matroid programs} (OMPs) developed by Bland and Lawrence~\cite{Bland-OMP,FL-OM}; see details in the next section, and in \cite[Chapter 10]{BLSWZ_book}.
The intuition is as follows. 

Let $\widetilde M$ and $M'$ be a generic lifting and extension of $M$ inducing the signatures $\sigma^*$ and $\sigma$.
By the Topological Representation Theorem of Folkman and Lawrence \cite{FL-OM},
we can picture $\widetilde{M}$ (together with its distinguished element $g$) as an {\em affine pseudohyperplane arrangement} 
with $g$ as the hyperplane at infinity. Cells of the arrangement correspond to covectors of $\widetilde M$, those containing $g$ lying in the affine chart and those not containing $g$ lying at infinity.

In this picture, each region $R$ of the arrangement (or tope of $\widetilde M$) corresponds to a $\sigma$-compatible orientation of $M$, namely the orientation that makes $R$ the positive region of the arrangement.
See the left part of Figure~\ref{fig:PSA}, where an affine arrangement (and its corresponding zonotopal tiling) are shown.

%In the realizable case, such arrangement can be thought as the dual of the zonotopal tiling used in \cite{BBY}; this phenomenon is related to the Bohne--Dress theorem on single-element liftings of realizable oriented matroids \cite{Bohne_thesis, RZ_BD_thm}.

\begin{figure}
\centering
\includegraphics[width=0.45\textwidth]{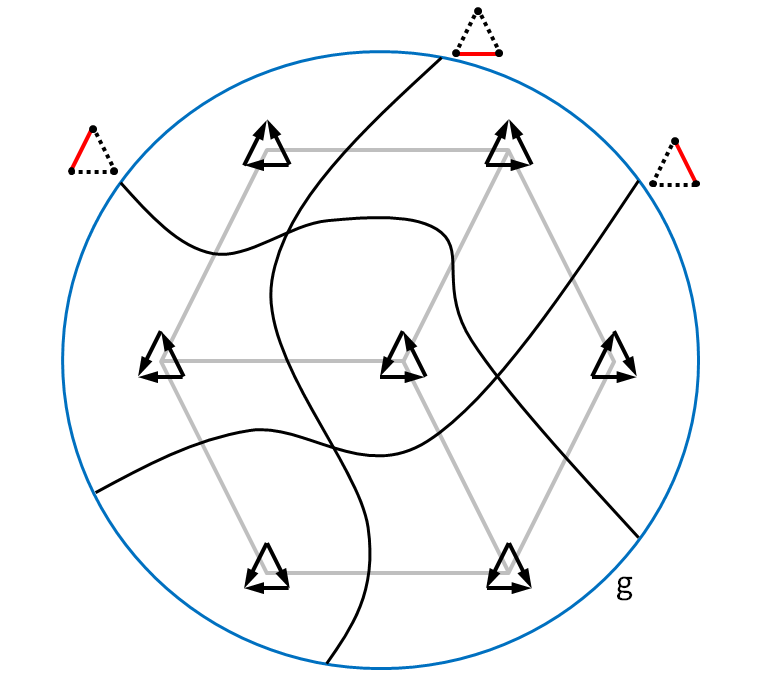}
\includegraphics[width=0.45\textwidth]{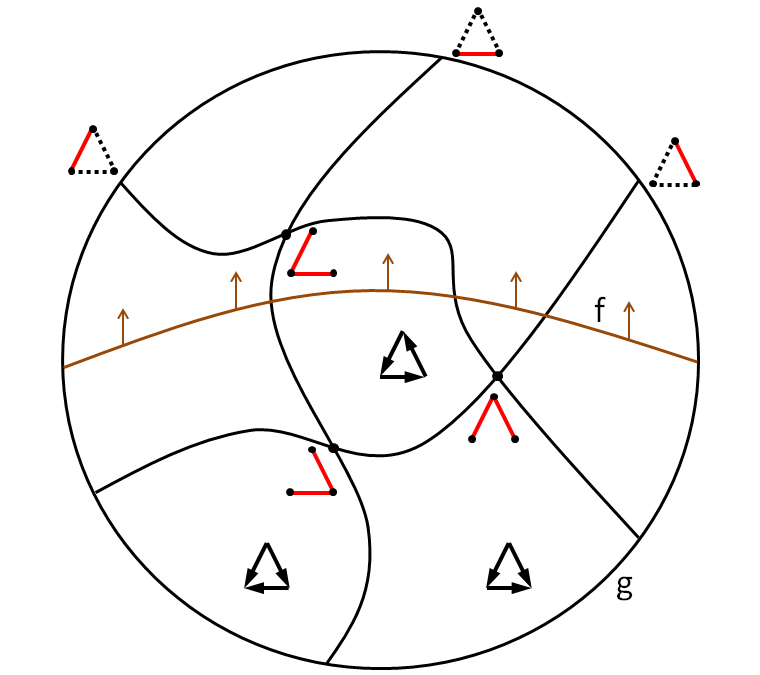}
\caption{Left: The affine pseudohyperplane arrangement of $M(K_3)$ (the three curves represent the elements of $M$), together with the extra elements $g$. The regions are labeled by $\sigma$-compatible orientations of $M$.
Right: The new curve represents $f$. There are three regions whose optima with respect to $f$ are bounded, and each of these optima is the intersection of curves that form a basis of $M$.}
\label{fig:PSA}
\end{figure}

The distinguished element $f$ of the extension $M'$ can be included into the picture as an ``increasing direction'' or ``objective function'', with respect to which we consider the optimum of each region; see the right part of Figure~\ref{fig:PSA}. 
In Section \ref{sec:main} (Theorem~\ref{thm:main_actual}), we prove that 
\begin{quote}
    the regions of the arrangement which are {\em bounded} with respect to $f$ (that is, those 
    whose optima are not lying on $g$) are precisely the $(\sigma,\sigma^*)$-compatible orientations
\end{quote}
   and (by genericity of $M'$)
\begin{quote}
    bounded vertices of the arrangement (that is, cocircuits of $\widetilde M$ containing $g$) correspond bijectively to bounded regions, by assigning to each bounded region its optimum vertex. 
\end{quote}  
    Finally, by genericity of $\widetilde{M}$, 
\begin{quote}
    (bounded) vertices of the arrangement correspond bijectively to bases of $M$. 
\end{quote}  
     These three bijections together give the bijection stated in Theorem~\ref{thm:main}. It follows by construction that the map  from bases to orientations so obtained coincides with the map $\beta_{\sigma,\sigma^*}$. 
     
     Let us mention however that, although the pseudo-hyperplane arrangement image is useful for intuition, our proof does not use or need the Topological Representation Theorem. Instead, we directly construct the bijection between $(\sigma,\sigma^*)$-compatible orientations and bounded vertices of the arrangement (that is, cocircuits of $\widetilde M$ not vanishing at $g$) using the \emph{Fundamental Theorem of Oriented Matroid Programming} (Theorem~\ref{thm:OMP}).

\subsection{Connections with Other Works} \label{sec:connection}

As mentioned at the beginning, there have been several works on constructing bijections between bases and orientations in recent years. Two of them, by Gioan--Las Vergnas and by Ding, respectively, are of particular relevance here. We explain how these works are related to our construction and highlight some technical/conceptual differences. We consider clarifying this picture to be a secondary contribution of this paper.

\medskip

There is a series of works by Gioan and Las Vergnas on {\em active bijections}~\cite{gioan2015survey, gioan2022tutte, gioanactive, gioan2004active, GLV_Uniform, gioan2005activity, gioan2007fully, GLV_AB1, gioan2009linear, GLV_AB2a, GLV_AB2b, gioan2019active, gioan2018computing, las1984correspondence}, which began with the PhD thesis  of Gioan \cite{Gioan_thesis}. Given an oriented matroid $M$ and an ordering of its elements, one can define the {\em internal and external activities} of a basis and of an orientation. The \emph{active bijection} maps bases (together with extra specifications) to orientations in an activity preserving manner (hence their name). See Theorem~\ref{thm:G_LV_bij} for a more precise statement.

A major component in the construction by Gioan and Las Vergnas is to describe  the bijection for $(1,0)$-bases (bases with a single internally active element and no externally active ones). For the proof of this case, the machinery of oriented matroid programming is also applied, in a way quite similar to ours.

In Section~\ref{sec:activity} we show that the bijection of Theorem~\ref{thm:main} for the original oriented matroid $M$ with a given generic extension $M'$ and lift $\widetilde M$ coincides with the active bijection of Gioan and Las Vergnas for an oriented matroid $\widetilde M'$ that simultaneously extends/lifts $M'$ and $\widetilde M$ and in which $g$ and $f$ are the first and second element in the order. The existence of $\widetilde M'$ is a result of \cite{Santos_book} that we also use for the proof of Theorem~\ref{thm:main} (see our Lemma~\ref{lem:common_ex_lifting}).
This also establishes a connection between our work with the classical theorem of Greene and Zaslavsky that the number of bounded regions in an affine pseudohyperplane arrangement equals the {\em beta invariant} of the corresponding matroid (independent of the choice of pseudohyperplane at infinity) \cite[Theorem D]{GZ_HPZono}: the bounded regions of this auxiliary oriented matroid (with $g$ being at infinity) correspond to the regions of the oriented matroid program considered in the proof of Theorem~\ref{thm:main}, whose optima with respect to $f$ are bounded.
%Our Theorem~\ref{thm:main} can be thought as counting regions with respect to another type of boundedness, and again the count is independent of the choice of $f$ and $g$ (as long as the choice is generic). 

%\Chiho{My suggestion to revise the above paragraph}
%\Paco{edited it}
However, regarding our bijection as a special case of the active bijection would be an oversimplification, since the two bijections have different input data and description, and since 
many extensions  of Theorem~\ref{thm:main} cannot be easily formulated using the language of active bijections; this includes  Theorem~\ref{thm:characterization}, the construction of Jacobian actions for the regular matroid case, and the relation with oriented matroid triangulations (see below).
Conversely, our setting cannot capture many features of active bijections either.
It is thus appropriate to say that our work gives an alternative way of applying the Main Theorem of OMP to bijective questions.

\medskip

In \cite{Ding_GB2}, Ding unified geometric bijections (as in the original \cite{BBY}) and the family of bijections by K\'{a}lm\'{a}n--T\'{o}thm\'{e}r\'{e}sz \cite{TL_GB} for regular matroids\footnote{The work of K\'{a}lm\'{a}n--T\'{o}thm\'{e}r\'{e}sz  concerns {\em hypergraphs} and their associated {\em root polytopes}. Ding transferred the construction to Lawrence polytopes of regular matroids.}, using the insight that the data needed to define each family of bijections can be interpreted in terms of dissections of the {\em Lawrence polytope} $\Lambda(M)$ of a regular matroid $M$ (and/or its dual).
In particular, he suggested the notion of {\em triangulating signatures} that capture the data of arbitrary triangulations of $\Lambda(M)$, generalizing acyclic signatures (see Remark~\ref{rem:acyclic}) which induce regular triangulations.
He then proved in \cite[Theorem~1.20]{Ding_GB2} that a pair of triangulation signatures yields a bijection in the same way as \cite[Theorem~1.4.1]{BBY}.

We show that Ding's bijections {\em in the triangulation setting} are special cases of Theorem~\ref{thm:main} by showing that his \emph{triangulating signatures} are precisely signatures induced by generic single-element liftings/extensions, and they induce triangulations of Lawrence polytopes using the abstract formalism of oriented matroid triangulations developed by the second author \cite{Santos_book}.
%As a by-product, we obtain a proof of the generic case of the Bohne--Dress Theorem, namely a bijection between fine zonotopal tilings of a zonotope and generic liftings of its underlying oriented matroid.
%\chiho{We currently only sketched the proof, but actually use the result of Paco that is morally Bohne--Dress itself.}

Even if the relation between single-element lifts and triangulations of Lawrence polytopes holds for arbitrary oriented matroids, other
ingredients in Ding's proof (or the very formulation) of the results of \cite{Ding_GB2} depend critically on the properties of regular matroids (e.g., realizability), and do not extend, as far as we can see, to an alternative proof of our results for general oriented matroids.
Nevertheless, the bijections in \cite{Ding_GB2} are derived from not just a pair of triangulations of the Lawrence polytope, but also when one of them is a dissection, a notion strictly more general than triangulations and with no abstraction in the context of oriented matroids at this moment; moreover, the formalism (using {\em atlases}) and arguments in his work are novel and of independent interest. Hence our work does not subsume his either.

\medskip

We end this section mentioning another result conceptually, but not technically, similar to ours.
Given a {\em strong map} between oriented matroids $M_1\rightarrow M_2$ on the same ground set, Las Vergnas gave a formula to count the number of orientations that are acyclic in $M_1$ and totally cyclic in $M_2$ \cite{LV_ATCO}. Theorem~\ref{thm:main} has a similar flavour in view of Lemma~\ref{lem:general_CCMO}, although we note that the map $\widetilde{M}\rightarrow M'$ is not a strong map in general; 
indeed, while an extension followed by a contraction of the new elements gives rise to a strong map, $\widetilde{M}\rightarrow M'$ can be thought of (by Lemma~\ref{lem:common_ex_lifting}) as a single-element extension followed by contracting a {\em different} element (or equivalently, the map is a contraction followed by an extension).

\subsection{Background and Organization of the Paper}

Theorem~\ref{thm:main} has been the content of an earlier version of this manuscript, accepted as an extended abstract of FPSAC 2019 under the title {\em Topological Bijections for Oriented Matroids}.
The present paper adds substantial new material to it, including Theorem~\ref{thm:characterization} and an update on the advances along this direction in recent years. We also decided to change the name of our bijections to ``extension-lifting bijections'' since, as mentioned above, we use the topological representation of oriented matroids as intuition but our results and proofs do not rely on it.

%The structure of the paper is as follows. 
In Section~\ref{sec:prelim} we recall some basic facts from oriented matroid theory, mostly related to extensions, liftings, and oriented matroid programs. Section~\ref{sec:main} contains the proof of Theorems~\ref{thm:main} and~\ref{thm:characterization}, and Sections~\ref{sec:activity} and \ref{sec:lawrence} show how our results connect to the active bijections by Gioan and Las Vergnas, and to the work of Ding on triangulations of Lawrence polytopes.

\section{Preliminaries}
\label{sec:prelim}

We assume the reader is familiar with the basic definitions in oriented matroid theory, and we refer to \cite{BLSWZ_book} for details and notation. 
We here recall some notions and results that are crucial in our proofs.

Let $M$ be an oriented matroid of rank $r$ on ground set $E$. The set of bases of $M$ will be denoted by $\Bcal(M)$, and the set of signed circuits (respectively, signed cocircuits) of $M$ will be denoted by $\Ccal(M)$ (respectively, $\Ccal^*(M)$). The support of a signed subset $X$ will be denoted by $\underline{X}$. Whenever we speak of a circuit without the adjective ``signed'', we always mean a circuit of the underlying ordinary matroid; same for a cocircuit of $M$. We often write $M\cup f$, $M\setminus f$, etc instead of $M\cup\{f\}$, $M\setminus\{f\}$, etc for simplicity.
%, and the underlying matroid of an oriented matroid $M$ will be denoted by $\underline{M}$

\begin{definition}
An oriented matroid $M'$ of rank $r$ is a {\em single-element extension} of $M$ if the ground set of $M'$ is $E\sqcup f$ for some new element $f$ and $M=M'\setminus f$. Dually, $\widetilde{M}$ of rank $r+1$ is a {\em single-element lifting} of $M$ if the ground set of $\widetilde{M}$ is $E\sqcup g$ for some new element $g$ and $M=\widetilde{M}/g$.

%\paco{added definition of generic}
The extension (respectively, lifting) is \emph{generic} if every circuit of $M'$ (respectively cocircuit of $\widetilde M$) containing $f$ (respectively, $g$) is spanning (respectively, has independent complement). 
%\paco{added the "equivalently"}
%Equivalently, if every cocircuit of $M'$ (respectively circuit of $\widetilde M$) not containing $f$ (respectively, $g$) has independent complement (respectively, is spanning).

\end{definition}

A {\em cocircuit signature} is a map ${\sigma}^*:\Ccal^*(M)\rightarrow\{+,-,0\}$ satisfying $\sigma^*(-D)=-\sigma^*(D)$; it is {\em generic} if the image lies in $\{+,-\}$.
Let $M'$ be a single-element extension of $M$. For every signed cocircuit $D$ of $M$ there exists a unique signed cocircuit $D'$ of $M'$ such that $D'|_E=D$. Therefore we can define a cocircuit signature associated to the extension by setting ${\sigma}^*(D):=D'(f)$. Clearly, the extension is generic if and only if $\sigma^*$ is generic. In such case we can understand ${\sigma}^*$ 
%induces a {\em cocircuit signature} $\sigma^*$ of $M$ that 
as selecting one of the two signed cocircuits of $M$ supported on each cocircuit of $M$, namely the one that extends to have $f$ on its positive side.
Thus, we often abuse notation and write $\sigma^*(\underline{D})$ to be such a signed cocircuit for a cocircuit $\underline{D}$ of $M$ and say that such a signed cocircuit is {\em compatible} with $\sigma^*$; this is the terminology being used in some literature such as \cite{BBY, Ding_GB, Ding_GB2}, and should cause no ambiguity as the input is now a (unsigned) cocircuit.

Dually, every generic single-element lifting induces a generic {\em circuit signature} $\sigma$ that sends each circuit $\underline{C}$ of $\underline{M}$ to the signed circuit $\sigma(\underline{C})$ of $M$ with that support that extends to have $g$ on its positive side.

%\paco{(Re)moved acyclic signatures}
A particularly important class of single-element extensions/liftings are the lexicographic extensions/liftings.

\begin{definition} \label{def:lexi}
A set of {\em lexicographic data} is an ordered subset $E':=(e_1,\ldots,e_k)$ of $E$ together with a sequence of signs $s_1,\ldots,s_k\in\{+,-\}$. If $E=E'$ we say the data is \emph{full}.

The {\em lexicographic cocircuit signature} induced by such data is defined as follows: for each signed cocircuit $D$,  ${\sigma}^*(D)=0$ if $\underline{D}\cap E'=\emptyset$, or otherwise ${\sigma}^*(D)= s_i\cdot D(e_i)$ where $i$ is the first index with $e_i \in\underline{D}\cap E'$.
By definition, this signature is generic if and only if $E'$ is spanning. In this case $\sigma^*$ sends each cocircuit $\underline{D}$ to the signed cocircuit $D$ such that $D(e_i)=s_i$, where $e_i$ is again the first element in $\underline{D}\cap E'$.

Define the {\em lexicographic circuit signature}  analogously. It is generic if and only if $E\setminus E'$ is independent. 
\end{definition}

%Observe that for generic lexicographic extensions and liftings there is no loss of generality in assuming $E=E'$, except the last few elements in it do not affect the result.

\begin{lemma}[Las Vergnas, see~\protect{\cite[Definition~7.2.3]{BLSWZ_book}}]
Lexicographic cocircuit signatures (respectively, circuit signatures) are extension signatures (respectively, lifting signatures).
\end{lemma}

%\begin{example}
%In \cite{Bernardi}, Bernardi introduced a combinatorial process for graphs embedded on orientable surfaces, and use it to define a family of bijections between spanning trees of a graph and special {\em chip configurations}.
%Briefly speaking, given a spanning tree of an embedded graph, one ``walks'' around the tree according to the embedding, and puts chips at the vertices as per the tour (or in the language of this note, orients the edges as per the tour).
%By \cite[Example~2.4.3.]{BBY}, when the embedded graph is a plane graph, Bernardi's bijections are geometric bijections, which from the above proposition are topological bijections as well.
%Hence, in some sense, the previously proven results on duality of planar Bernardi bijections \cite{BW_torsor} (and {\em a posteriori} the parallel results on planar {\em rotor-routing} \cite{CGMPWY}) are oriented matroid duality in disguise.
%\end{example}

As said in the introduction, an {\em orientation} of an oriented matroid $M$ is a map $\Ocal:E\rightarrow\{+,-\}$ and it can be interpreted  as a {\em reorientation} $_{-A}M$ of $M$ along a subset $A\subset E$ of elements.
We say that $\Ocal$ is {\em conformal} with a signed circuit or signed cocircuit $C$ if $\Ocal(e)=C(e)$ for every $e\in\underline{C}$. (This agrees with the usual notion of conformality for arbitrary sign vectors); we also say $\Ocal$ is conformal with a circuit or cocircuit $\underline{C}$ if $\Ocal|_{\underline{C}}$ is a signed circuit or signed cocircuit of $M$.
%\chiho{``Signed'' is more precise here, otherwise it should be ``$\underline{C}$ is a positive circuit of $_{-A}M$''.}
%if there is no element $e$ where $\Ocal$ and $C$ have (non-zero) opposite signs. Since $\Ocal$ is never zero, this is equivalent to 
% A circuit or cocircuit $C\subset E$ is {\em compatible} with $\Ocal$ if one of the signed versions of $C$ is compatible with $\Ocal$.

\begin{definition} \label{def:CCMO}
Let $\sigma$ (respectively, $\sigma^*$) be the circuit (respectively, cocircuit) signature induced by some generic single-element lifting $\widetilde{M}$ (respectively, extension $M'$). Then an orientation $\Ocal$ of $M$ is {\em $(\sigma,\sigma^*)$-compatible} if $\sigma$ (respectively, $\sigma^*)$ is positive at every signed circuit (respectively, cocircuit) conformal with $\Ocal$.
%is oriented according to $\sigma$ (respectively, $\sigma^*$). 
The set of $(\sigma,\sigma^*)$-compatible orientations of $M$ is denoted by $\rchi(M;\sigma,\sigma^*)$.
\end{definition}

For the proof of Theorem~\ref{thm:main} we use the machinery of oriented matroid programming. We now recall the Main Theorem of Oriented Matroid Programming, giving some essential definitions along the way.

\begin{theorem} \cite[Theorem 10.1.13]{BLSWZ_book} \label{thm:OMP}
Let $M$ be an oriented matroid with two distinguished elements $f\neq g$, such that $f$ is not a coloop and $g$ is not a loop; the data $(M,g,f)$ specifies an {\em oriented matroid program}.

Suppose the program is (equivalent formulations by \cite[Theorem 10.1.9]{BLSWZ_book})
\begin{enumerate}
    \item {\em Feasible}: there exists a signed cocircuit $Y$ (a ``feasible region'') of $M$ such that $Y(e)\geq 0 \  \forall e\neq f$ and $Y(g)=+$, and
    \item {\em Bounded}: there exists a signed circuit $C$ (a ``bounded cone'') such that $C(e)\geq 0\  \forall e\neq g$ and $C(f)=+$.
\end{enumerate}

Then the program has a solution $Y$, which is a covector of $M$ that is:
\begin{enumerate}
    \item {\em Feasible}: $Y(e)\geq 0 \  \forall e\neq f$ and $Y(g)=+$, and
    \item {\em Optimal}: for every covector $Z$ such that $Z(f)=+,\ Z(g)=0$, we have $(Y\circ Z)(e)<0$ for some $e\neq f,g$.
\end{enumerate}
\end{theorem}

For Theorem~\ref{thm:characterization}
we need the following characterization of signatures of extensions/liftings. This was proved by Las Vergnas~\cite{LasVergnas} and is reproduced also in \cite[Theorem 7.1.8]{BLSWZ_book} and \cite[Lemma 1.3]{Santos_book}. (Only the case of extensions is stated in these references, but the case of liftings follows by duality.)

\begin{lemma}[Las Vergnas]
\label{lemma:3elements}
Let $\sigma: \Ccal(M)\to \{+,-,0\}$ and $\sigma^*: \Ccal^*(M)\to \{+,-,0\}$ be signatures defined on the set of signed circuits and signed cocircuits of an oriented matroid $M$, respectively. Then:
\begin{enumerate}
    \item $\sigma^*$ is the cocircuit signature of a single-element extension if and only if it is so when restricted to every uniform rank two minor on three elements.
    \item $\sigma$ is the circuit signature of a single-element lifting if and only if it is so when restricted to every uniform rank one minor on three elements.
\end{enumerate}
\end{lemma}

In this statement restricting $\sigma$ or $\sigma^*$ means the following. Let $U= M/B\backslash A$ be a minor of $M$, where $A$ and $B$ are disjoint subsets of the set $E$ of elements of $M$. Then, every signed cocircuit $D$ of $U$ lifts to a unique signed cocircuit $\widetilde D$ of $M$ whose support is contained in $E\setminus B$, and every signed circuit $C$ of $U$ extends to a unique signed circuit $ C'$ of $M$ whose support is contained in $E\setminus A$. Restricting $\sigma$ and $\sigma^*$ to $U$ means giving $C$ and $D$ the signs that $C'$ and $\widetilde D$ get in $M$.

\section{Proof of the Main Results} \label{sec:main}

Throughout this section, let $M$ be an oriented matroid on ground set $E$, and let $M'$ (respectively, $\widetilde{M}$)  be a generic single-element extension (respectively, lifting) of $M$ on ground set $E\cup f$ (respectively, $E\cup g$).

Theorem~\ref{thm:main} will be deduced from the following theorem, which constructs the inverse of the map $\beta_{\sigma,\sigma^*}$.
We use the following notation.
%\paco{moved this notation here}
For an orientation $\Ocal$ of $M$, $\Ocal'_-$ denotes the orientation of $M'$ such that $\Ocal'_-|_E=\Ocal$ and $\Ocal'_-(f)=-$; dually, $\widetilde{\Ocal}_-$ is the orientation of $\widetilde{M}$ such that $\widetilde{\Ocal}_-|_E=\Ocal$ and $\widetilde{\Ocal}_-(g)=-$.
Given a sign vector $X$ of $M$, when we consider $M'$ (respectively, $\widetilde{M}$), $(X\ \epsilon), \epsilon\in\{+,-,0\}$ is understood as a sign vector that agrees with $X$ over $E$ and is equal to $\epsilon$ over $f$ (respectively, $g$).

\begin{theorem} \label{thm:main_actual}
For every $\Ocal\in\rchi(M;\sigma,\sigma^*)$, there exists a unique basis $B\in\Bcal(M)$ such that $B\cup f$ is a circuit conformal with $\Ocal'_-$ and $(E\setminus B)\cup g$ is a cocircuit conformal with $\widetilde{\Ocal}_-$.
\end{theorem}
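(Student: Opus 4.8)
The plan is to establish existence and uniqueness separately, translating the circuit/cocircuit conditions on $\Ocal$ into statements about the extension $M'$ and the lifting $\widetilde{M}$ viewed through oriented matroid programming. First I would fix an $\Ocal\in\rchi(M;\sigma,\sigma^*)$ and pass to the combined oriented matroid on $E\sqcup\{f,g\}$ obtained by performing the extension and lifting simultaneously (using Lemma~\ref{lem:common_ex_lift}, which the introduction promises identifies $\widetilde{M}\to M'$ with a contraction/extension pair). The key reformulation is that ``$B\cup\{f\}$ is a circuit compatible with $\Ocal'_-$'' says $f$ is spanned by $B$ in $\underline{M'}$ with the unique sign pattern forced by the extension data, and dually ``$(E\setminus B)\cup\{g\}$ is a cocircuit compatible with $\widetilde{\Ocal}_-$'' says $E\setminus B$ together with $g$ is a cocircuit. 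Since $B$ is a basis, $B\cup\{f\}$ containing a circuit and $(E\setminus B)\cup\{g\}$ containing a cocircuit are the natural fundamental circuit/cocircuit conditions in $M'$ and $\widetilde{M}$; the genericity of the extension guarantees $f$ lies on a unique such circuit, and genericity of the lifting guarantees $g$ lies on a unique such cocircuit.

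The geometric intuition, which I would make precise, is the oriented matroid program picture sketched after Theorem~\ref{thm:main}: the lifting $\widetilde{M}$ presents $\Ocal$ as a region in an affine pseudosphere arrangement, and the extension element $f$ acts as an objective function. The statement to prove is exactly that this region has a \emph{bounded} optimum, and that the optimum is a vertex (a basis intersection) realizing both the circuit and cocircuit conditions. For existence, I would argue that $(\sigma,\sigma^*)$-compatibility of $\Ocal$ forces the optimum of the region not to lie on $g$ (the element at infinity): if it did, one could extract a signed circuit or cocircuit compatible with $\Ocal$ but oriented against $\sigma$ or $\sigma^*$, contradicting compatibility. This is where the two signature conditions get used in tandem, and is likely the technical heart.

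For uniqueness, I expect the cleanest route is through the standard oriented matroid programming duality: a feasible bounded program has a unique optimal vertex, and the optimal vertex is characterized by complementary slackness between the circuit through $f$ and the cocircuit through $g$. Concretely, if $B$ and $B'$ both satisfied the conclusion, the associated fundamental circuit of $f$ and fundamental cocircuit of $g$ would give a pair of compatible signed circuit and cocircuit that conflict on their common support unless $B=B'$; the orthogonality of circuits and cocircuits in an oriented matroid then forces the supports to coincide. I would phrase this as: the circuit $B\cup\{f\}$ and the cocircuit $(E\setminus B)\cup\{g\}$ must be orthogonal as signed vectors, and compatibility with the fixed orientations $\Ocal'_-$ and $\widetilde{\Ocal}_-$ pins down $B$ uniquely.

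The hard part will be handling the existence direction without a realization: in the realizable case one simply solves a linear program and reads off the optimal vertex, but here I must substitute the pseudosphere/OM-programming machinery for that computation and verify that genericity of both the extension and the lifting is exactly what rules out degenerate optima lying on $g$ or failing to be a single basis. I anticipate the argument will hinge on carefully combining the extension axiom (every cocircuit of $M$ extends uniquely to $M'$) with the dual lifting statement, and on invoking the duality between bounded feasible regions and optimal vertices in oriented matroid programs rather than any explicit coordinates.
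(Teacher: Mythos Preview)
Your existence strategy is essentially the paper's: reorient so $\Ocal\equiv +$, form the program $(\widetilde{M}',g,f)$ via Lemma~\ref{lem:common_ex_lift}, use Lemma~\ref{lem:general_CCMO} to see that $\widetilde{\Ocal}_+$ acyclic gives feasibility and $\Ocal'_+$ totally cyclic gives boundedness, invoke the main theorem of oriented matroid programming to obtain an optimal covector $Y$, and then use genericity of the lifting to force $\underline{Y}\setminus\{f\}$ to be a single cocircuit $(E\setminus B_0)\cup\{g\}$ and genericity of the extension (plus orthogonality with fundamental cocircuits) to check that $B_0\cup\{f\}$ is a positive circuit. So on that half you and the paper agree.

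For uniqueness the paper takes a different, purely combinatorial route and your sketch has a gap. The paper does \emph{not} appeal to uniqueness of optima in oriented matroid programs; instead, given two candidate bases $B_1,B_2$, it lifts the circuits $C_i$ and extends the cocircuits $D_i$ into $\widetilde{M}'$, performs a case split on the sign $\widetilde{C_1}(g)$, applies circuit elimination to $\widetilde{C_1}$ and $-\widetilde{C_2}$ to eliminate $f$, and then derives contradictory signs at $g$ from orthogonality with $D'_1$ and $D'_2$. Your proposed shortcut (``orthogonality of circuits and cocircuits forces the supports to coincide'') does not work as stated: the circuit $B_1\cup\{f\}$ lives in $M'$ and the cocircuit $(E\setminus B_2)\cup\{g\}$ lives in $\widetilde{M}$, and even after passing to $\widetilde{M}'$ a single orthogonality relation between one circuit and one cocircuit does not pin down $B_1=B_2$; the elimination step producing a new circuit that can be tested against \emph{both} $D'_1$ and $D'_2$ is the missing ingredient. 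Your alternative of invoking uniqueness of the optimal vertex in a non-degenerate OM program is viable, but you would then need to verify that genericity of both the extension and the lifting makes $(\widetilde{M}',g,f)$ non-degenerate in the required sense, which is an extra lemma the paper avoids.
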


As explained in the introduction, such a basis corresponds to the optimum (with respect to $f$) of the region corresponding to $\Ocal$ in the pseudohyperplane arrangement representing $\widetilde M$.

We start with a few lemmas.

\begin{lemma} \label{lem:generic_ex_circuit}
The set of circuits of $M'$ containing $f$ is $\{B\cup f:B\in\Bcal(M)\}$. Dually, the set of cocircuits of $\widetilde{M}$ containing $g$ is $\{(E\setminus B)\cup g:B\in\Bcal(M)\}$.
\end{lemma}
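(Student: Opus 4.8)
The plan is to reduce the statement to a fact about the underlying matroid $\underline{M'}$, since whether a subset is the support of a circuit depends only on $\underline{M'}$, and then to obtain the dual assertion by applying the primal one to $M^*$. First I would unpack what genericity buys us at the level of $\underline{M'}$. The condition that the image of $\hat\sigma^*$ is $\{+,-\}$ says that for every cocircuit $D$ of $M$ the unique extending cocircuit $D'$ of $M'$ satisfies $D'(f)\neq 0$, i.e.\ $f\in\underline{D'}$. Since cocircuit supports are complements of hyperplanes, this is exactly the statement that $f$ is added to no hyperplane of $\underline{M}$; equivalently, $f$ lies in the closure of no proper flat of $\underline{M}$, so that $f$ is in \emph{general position}. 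In particular $\operatorname{rank}\underline{M'}=\operatorname{rank}\underline{M}=:r$.

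With $f$ in general position the two inclusions are routine closure computations. For the forward inclusion, let $B\in\Bcal(M)$. Then $B$ spans, so $f\in\operatorname{cl}(B)$ and $B\cup\{f\}$ is dependent; moreover, for each $e\in B$ the flat $\operatorname{cl}(B\setminus\{e\})$ is a hyperplane, so general position gives $f\notin\operatorname{cl}(B\setminus\{e\})$, whence $(B\setminus\{e\})\cup\{f\}$ is independent. Thus every proper subset of $B\cup\{f\}$ is independent and $B\cup\{f\}$ is a circuit. For the reverse inclusion, let $C$ be any circuit of $M'$ with $f\in C$. Then $C\setminus\{f\}$ is independent in $M$ and $f\in\operatorname{cl}(C\setminus\{f\})$; were $\operatorname{rank}(C\setminus\{f\})<r$, its closure would be a proper flat, hence contained in a hyperplane $H$ with $f\in\operatorname{cl}_{M'}(H)$, contradicting general position. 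Hence $C\setminus\{f\}$ is a basis of $M$, and $C=(C\setminus\{f\})\cup\{f\}$ has the desired form.

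Finally I would deduce the cocircuit statement by duality. A generic single-element lifting of $M$ dualizes to a generic single-element extension of $M^*$, in the sense that $(\widetilde{M})^*$ is a generic single-element extension of $M^*$ by $g$, and cocircuits of $\widetilde{M}$ are circuits of $(\widetilde{M})^*$. So the primal result applied to $M^*$ and $(\widetilde{M})^*$ identifies the cocircuits of $\widetilde{M}$ through $g$ with $\{B^*\cup\{g\}:B^*\in\Bcal(M^*)\}$. Translating via $\Bcal(M^*)=\{E\setminus B:B\in\Bcal(M)\}$ then gives exactly $\{(E\setminus B)\cup\{g\}:B\in\Bcal(M)\}$.

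The only genuinely delicate point is the first step: verifying that the oriented genericity hypothesis, namely surjectivity of $\hat\sigma^*$ onto $\{+,-\}$, is equivalent to general position of $f$ in $\underline{M}$, which rests on matching the support of the extending cocircuit $D'$ with the incidence of $f$ to the corresponding hyperplane. Everything after that is standard matroid theory, so I expect no serious obstacle once this dictionary between the orientation data and the underlying matroid is in place.
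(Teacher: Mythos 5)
Your proof is correct, but it takes a genuinely different route from the paper's. You translate the oriented-matroid genericity condition into unsigned matroid language---$f$ is in general position, so that $\underline{M'}$ is the free extension of $\underline{M}$---and then run routine closure/rank computations, obtaining the dual statement by matroid duality (since $\widetilde{M}^*$ is a generic single-element extension of $M^*$ by $g$). The paper instead stays entirely within the circuit/cocircuit formalism: it first shows that any basis $B$ of $M$ remains a basis of $M'$ (using that, by genericity, $f$ is not an isthmus, and that a fundamental cocircuit of $f$ would restrict to contain a cocircuit of $M$ avoided by $B$); it then shows that the fundamental circuit $C(B,f)$ is all of $B\cup\{f\}$, via the equivalence $e\in C(B,f)\iff f\in C^*(B,e)$ together with the fact that genericity forces each fundamental cocircuit $D$ of $M$ to extend to the cocircuit $D\cup\{f\}$ of $M'$; the converse is a maximality argument, and the dual half is proven ``similarly'' rather than by invoking duality. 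Your reduction buys conceptual clarity---the lemma becomes the standard fact that the circuits through $f$ of a free extension are exactly the sets $B\cup\{f\}$ with $B$ a basis---plus a cleaner treatment of the dual statement; the paper's argument buys self-containedness within the signed circuit/cocircuit axiomatics used throughout the rest of its main proof. One point you should make airtight: ``$f$ lies in the closure of no proper flat of $\underline{M}$'' does not by itself give $\operatorname{rank}\underline{M'}=\operatorname{rank}\underline{M}$, since a coloop also lies in no such closure. You need to rule out the coloop case separately, by observing that if $f$ were a coloop then every extended cocircuit $D'$ would satisfy $D'(f)=0$, contradicting genericity; this is precisely the ``$f$ is not an isthmus'' step in the paper, and it does follow from your dictionary ($f\in\underline{D'}$ for every cocircuit extension), but not from general position alone.
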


\begin{proof}
Let $B\in\Bcal(M)$. $B$ is also a basis of $M'$, since $M$ and $M'$ have the same rank and independent sets of $M$ are independent in $M'$. 
Now,
%Let $B\in\Bcal(M)$. We first claim that $B$ is also a basis of $M'$. Since every circuit of $M'$ not containing $f$ is a circuit of $M$, $B$ is independent in $M'$; since every circuit of $M$ is a circuit of $M'$, $B\cup\{e\}$ is dependent in $M'$ for any $e\in E\setminus B$. So if $B$ is not a basis of $M'$, it must be the case that $X:=B\cup\{f\}$ is a basis of $M'$. In such case, $B=X\setminus\{f\}$ avoids the fundamental cocircuit $D'$ of $f$ with respect to $X$ in $M'$. Since $M'$ is generic, $f$ is not an isthmus and $D'\setminus\{f\}$ contains a cocircuit $D''$ of $M$, now $B$ avoids the cocircuit $D''$ in $M$, contradicting the basic property of bases.
the fundamental circuit $C(B,f)$ must be spanning, by definition of genericity for $f$, so it strictly contains a basis. This implies $C(B,f) = B\cup \{f\}$. In particular, for every basis $B$ of $M$, $B\cup \{f\}$ is a circuit of $M'$.
%is the whole of $B\cup f$. Suppose not, pick an arbitrary $e\in (B\cup f)\setminus C'$ and let $D$ be the fundamental cocircuit of $e$ with respect to $B$ in $M$. On one hand, $D':=D\cup f$ is a cocircuit of $M'$ as the extension is generic, so $D'$ must be the fundamental cocircuit of $e$ with respect to $B$ in $M'$. On the other hand, since $e\not\in C'=C(B,f)$, $f$ cannot be in $D'=C^*(B,e)$, a contradiction. This shows $\{B\cup f:B\in\Bcal(M)\}$ are all circuits of $M'$.

Conversely, if $C'$ is a circuit of $M'$ containing $f$ then it is spanning. Being a circuit, removing one element (e.g. $f$) from it rank is preserved, so $C'\setminus \{f\}$ is indeed spanning and independent, i.e., a basis.
%
%. Then $Y:=C'\setminus f$ is independent in $M'$ thus in $M$. If $Y$ is not a basis of $M$, then it is properly contained in some $B\in\Bcal(M)$, but by the above containment, $B\cup f$ is a circuit of $M'$ properly containing $C'$, a contradiction. 

The dual statement is  proven similarly.
\end{proof}

\begin{lemma} \label{lem:general_CCMO}
An orientation $\Ocal$ of $M$ is $\sigma^*$-compatible if and only if $\Ocal'_-$ is totally cyclic. Dually, $\Ocal$ is $\sigma$-compatible if and only if $\widetilde{\Ocal}_-$ is acyclic.
\end{lemma}

\begin{proof}
Suppose $\Ocal'_-$ is conformal with some signed cocircuit $D'$. By \cite[Proposition 7.1.4 (ii)]{BLSWZ_book}, $D:=D'|_E$ is either (i) a signed cocircuit of $M$, in which $f\in\underline{D'}$, or (ii) equal to the conformal composition $D_1\circ D_2$ of signed cocircuits of $M$, in which $\sigma^*(D_1)=-\sigma^*(D_2)\neq 0$. For case (i), $D$ is a signed cocircuit conformal with $\Ocal$, but it is not compatible with $\sigma^*$ as $D'(f)=\Ocal'_-(f)=-$; for case (ii), both $D_1,D_2$ are conformal with $\Ocal$, but exactly one of them is not compatible with $\sigma^*$ as $\sigma^*(D_1)=-\sigma^*(D_2)$.

Conversely, if $D$ is a signed cocircuit conformal with $\Ocal$ but not compatible with $\sigma^*$, then $(D\ -)$ is a signed cocircuit of $M'$ that is conformal with $\Ocal'_-$, hence $\Ocal'_-$ is not totally cyclic. The dual statement can be proven similarly.
\end{proof}

Using the above lemmas, we can give an alternative description of the map $\beta_{\sigma,\sigma^*}$, matching the statement of Theorem~\ref{thm:main_actual}.

\begin{proposition} \label{prop:beta_EL_pos}
Let $B$ be a basis of $M$ and let $\Ocal=\beta_{\sigma,\sigma^*}(B)$. Then $B\cup f$ is a circuit conformal with $\Ocal'_-$ and $(E\setminus B)\cup g$ is a cocircuit conformal with $\widetilde{\Ocal}_-$.
\end{proposition}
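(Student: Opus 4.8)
Both of the underlying-set assertions are already in hand: $B\cup\{f\}$ is a circuit of $M'$ and $(E\setminus B)\cup\{g\}$ is a cocircuit of $\widetilde{M}$, both by Lemma~\ref{lem:generic_ex_circuit}. What remains is purely a matter of signs, namely to exhibit, for each statement, a signed version of the (co)circuit that agrees with the relevant orientation on its support. The two halves are dual, so I would prove the first in detail and then transpose the argument.

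My plan is to pin down the signs of a signed circuit $C'$ supported on $B\cup\{f\}$ by playing it against the fundamental cocircuits of the elements of $B$, using orthogonality in $M'$. Fix $e\in B$. The proof of Lemma~\ref{lem:generic_ex_circuit} shows that the fundamental cocircuit of $e$ with respect to $B$ in $M'$ has support $C^*(B,e)\cup\{f\}$; let $\hat{D}_e$ be the signed cocircuit of $M'$ with this support obtained by extending $\sigma^*(C^*(B,e))$. By the very definition of the signature $\sigma^*$ through $M'$ we then have $\hat{D}_e(f)=+$, while $\hat{D}_e(e)=\sigma^*(C^*(B,e))(e)=\Ocal(e)$, the last equality holding because $\beta_{\sigma,\sigma^*}$ orients $e\in B$ according to $\sigma^*(C^*(B,e))$.

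The decisive step is then a two-element orthogonality. Since $\underline{C'}=B\cup\{f\}$, $\underline{\hat{D}_e}=C^*(B,e)\cup\{f\}$, and $C^*(B,e)\cap B=\{e\}$, the supports meet in exactly $\{e,f\}$, so orthogonality of $C'$ and $\hat{D}_e$ forces $C'(e)\hat{D}_e(e)=-C'(f)\hat{D}_e(f)$. Substituting the two known signs gives $C'(e)\Ocal(e)=-C'(f)$, i.e. $C'(e)=-C'(f)\Ocal(e)$ for every $e\in B$. Choosing the global sign of $C'$ so that $C'(f)=-$, matching $\Ocal'_-(f)=-$, yields $C'(e)=\Ocal(e)=\Ocal'_-(e)$ throughout $B$, so $C'$ reproduces $\Ocal'_-$ on $B\cup\{f\}$. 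The second claim follows dually, applying orthogonality in $\widetilde{M}$ between the signed cocircuit on $(E\setminus B)\cup\{g\}$ and the signed circuits extending $\sigma(C(B,a))$ for $a\in E\setminus B$, whose supports are $C(B,a)\cup\{g\}$ and which carry $g$ positively by the definition of $\sigma$.

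The point needing the most care is that each support intersection is exactly a pair, so that orthogonality \emph{determines} the sign rather than merely constraining it; this is exactly where genericity is used, since it is genericity that forces $f$ (respectively $g$) into every fundamental cocircuit (respectively circuit) via Lemma~\ref{lem:generic_ex_circuit}. One must also track the conventions that both new elements are oriented negatively in $\Ocal'_-$ and $\widetilde{\Ocal}_-$, as it is the normalization $C'(f)=-$ (and its dual) that makes the restriction to the old ground set reproduce $\Ocal$ exactly rather than its reverse.
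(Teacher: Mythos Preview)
Your proof is correct and follows essentially the same route as the paper: fix the signed circuit on $B\cup\{f\}$ with $f$ negative, and for each $e\in B$ play it against the extension to $M'$ of the $\sigma^*$-oriented fundamental cocircuit $C^*(B,e)$, using that genericity forces $f$ into the support so that the two-element intersection $\{e,f\}$ lets orthogonality pin down $C'(e)=\Ocal(e)$. The paper's write-up is slightly terser but the argument is the same, including the appeal to duality for the second assertion.
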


\begin{proof}
By Lemma~\ref{lem:generic_ex_circuit}, $X:=B\cup f$ is a circuit of $M'$. Denote by $C$ the signed circuit of $M'$ whose support is $X$ and satisfies $C(f)=-$. For every $e\in B$, let $D_e$ be the fundamental cocircuit of $e$ with respect to $B$ in $M$, oriented according to $\sigma^*$. By the definition of $\sigma^*$, the signed subset $D_e':=(D_e\ +)$ is a signed cocircuit of $M'$, and $X\cap \underline{D_e'}=\{e,f\}$. By the orthogonality of signed circuits and cocircuits as well as the fact that $D_e'(f)=-C(f)$, we must have $\Ocal(e)=D_e(e)=D_e'(e)=C(e)$, with the first equality from the definition of $\beta_{\sigma,\sigma^*}$. Therefore $X$ is oriented as $C$ in $\Ocal'_-$ and thus a conformal circuit. The second statement is the dual of the first one.
\end{proof}

Now we show that the image of $\beta_{\sigma,\sigma^*}$ is contained in the set of $(\sigma,\sigma^*)$-compatible orientations.

\begin{proposition} \label{prop:EL_pos_CCM}
Let $\Ocal$ be an orientation of $M$. If there exists a basis $B\in\Bcal(M)$ such that $B\cup f$ is a circuit conformal with $\Ocal'_-$, and $(E\setminus B)\cup g$ is a cocircuit conformal with $\widetilde{\Ocal}_-$, then $\Ocal\in\rchi(M;\sigma,\sigma^*)$.
\end{proposition}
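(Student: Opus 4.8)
The plan is to verify the two halves of $(\sigma,\sigma^*)$-compatibility separately, translating each into a (co)cyclicity condition via Lemma~\ref{lem:general_CCMO}. That lemma reduces the goal $\Ocal\in\rchi(M;\sigma,\sigma^*)$ to showing that $\Ocal'_-$ is totally cyclic (which yields $\sigma^*$-compatibility) and that $\widetilde{\Ocal}_-$ is acyclic (which yields $\sigma$-compatibility). I would prove each by contradiction, feeding the hypothesized compatible circuit $B\cup\{f\}$ and cocircuit $(E\setminus B)\cup\{g\}$ into an orthogonality argument.

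For total cyclicity of $\Ocal'_-$, suppose to the contrary that some signed cocircuit $D$ of $M'$ is compatible with $\Ocal'_-$. Let $C$ be the signed circuit supported on $B\cup\{f\}$ and compatible with $\Ocal'_-$ provided by the hypothesis. Because $C$ and $D$ both agree with $\Ocal'_-$ on their supports, the product $C(e)D(e)$ equals $+$ for every $e\in\underline{C}\cap\underline{D}$; orthogonality of signed circuits and cocircuits then forces $\underline{C}\cap\underline{D}=\emptyset$, so $\underline{D}$ is disjoint from $B$. This contradicts the facts that $B$ is a basis of $M'$ (established in the proof of Lemma~\ref{lem:generic_ex_circuit}) and that every cocircuit meets every basis.

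The acyclicity of $\widetilde{\Ocal}_-$ is the precise dual. A signed circuit $C$ of $\widetilde{M}$ compatible with $\widetilde{\Ocal}_-$, paired by the same orthogonality argument against the compatible cocircuit supported on $(E\setminus B)\cup\{g\}$, would have support disjoint from $(E\setminus B)\cup\{g\}$, hence $\underline{C}\subseteq B$; but $B$ is independent in $\widetilde{M}$ (since $B$ is a basis of $M=\widetilde{M}/\{g\}$, the set $B\cup\{g\}$ is a basis of $\widetilde{M}$), so $B$ contains no circuit, a contradiction. Combining the two halves gives $\Ocal\in\rchi(M;\sigma,\sigma^*)$.

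I expect the orthogonality step to be the conceptual heart of the argument: the governing observation is that two signed (co)vectors that are simultaneously compatible with a single orientation can be orthogonal only if their supports are disjoint, which is exactly what converts ``there exists a compatible circuit/cocircuit'' into a support-disjointness statement. The only remaining care is the bookkeeping that identifies $B$ as a basis of $M'$ and $B\cup\{g\}$ as a basis of $\widetilde{M}$, which is where the conclusions of Lemma~\ref{lem:generic_ex_circuit} (and its dual), together with the genericity ensuring $g$ is not a loop, feed back in.
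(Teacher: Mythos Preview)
Your proof is correct and follows essentially the same approach as the paper: reduce via Lemma~\ref{lem:general_CCMO} to showing $\Ocal'_-$ is totally cyclic and $\widetilde{\Ocal}_-$ is acyclic, then derive a contradiction by confronting a hypothetical compatible (co)circuit with the basis property of $B$ in $M'$ (respectively, of $B\cup\{g\}$ in $\widetilde{M}$). The only cosmetic difference is that where you invoke signed circuit--cocircuit orthogonality directly to force $\underline{C}\cap\underline{D}=\emptyset$, the paper phrases the same step as a violation of the disjointness of the circuit part and cocircuit part of an orientation (citing \cite[Corollary~3.4.6]{BLSWZ_book}); these are equivalent formulations of the same fact.
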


\begin{proof}
By Lemma~\ref{lem:general_CCMO}, it suffices to show that $\Ocal'_-$ is totally cyclic and $\widetilde{\Ocal}_-$ is acyclic. Suppose $D$ is a signed cocircuit conformal with $\Ocal'_-$. Since $B$ is also a basis of $M'$ (see the proof of Lemma~\ref{lem:generic_ex_circuit}), $X:=\underline{D}\cap B$ is non-empty, but then $X$ will be simultaneously in the totally cyclic part and acyclic part of $\Ocal'_-$, contradicting \cite[Corollary 3.4.6]{BLSWZ_book}. The dual statement can be proven similarly.
\end{proof}

To prove Theorem~\ref{thm:main} via  Theorem~\ref{thm:main_actual} we need one additional construction.

\begin{lemma} \cite[Lemma 1.10]{Santos_book} \label{lem:common_ex_lifting}
Let $M'$ and $\widetilde{M}$ be oriented matroids with respective ground sets $E\sqcup\{f\}$ and $E\sqcup\{g\}$ and such that $M:=M'\setminus f = \widetilde M/g$.
Then there exists an oriented matroid $\widetilde{M}'$ on ground set $E\sqcup\{f,g\}$ such that $M'=\widetilde{M}'/g$ and $\widetilde{M}=\widetilde{M}'\setminus f$.

If $M'$ and $\widetilde{M}$ are generic as an extension and lift of $M$, then $\widetilde M'$ is a generic extension (resp. lift) of $\widetilde M$ (resp. of $M'$).
\end{lemma}

\begin{proof}[Sketch of proof]
%\paco{added sketch, to show that $f$ and $g$ can be inseparable as needed in section \ref{sec:activity}}
%Let M be an oriented matroid on a set E and let a ∈ E. Let (M/a) ∪ p be an extension of the contraction M/a. 
Every cocircuit $C$ of $\widetilde M$ vanishing on $g$ is a cocircuit of $\widetilde M/g =M'\setminus f$ and, hence, it extends to a unique cocircuit (that we still denote $C$) of $M'$. Thus, the following is a well-defined  cocircuit signature on $\widetilde M$: $\sigma^*(C) = C(g)$ if $C(g) \ne 0$ and $\sigma^*(C) = C(f)$ if $C(g) = 0$. 

That this cocircuit signature defines an extension $\widetilde M'$ of $\widetilde M$ is proven in \cite[Lemma 1.10]{Santos_book}. Calling $f$ the new element of this extension it is obvious that $\widetilde{M}=\widetilde{M}'\setminus f$ and it is easy to see that $M'=\widetilde{M}'/g$.

If $M'$ is a generic extension of $M$ then $\sigma^*$ is generic:  the only way to get a zero in $\sigma^*$ is when $\sigma^*(C) =C(f) =0$ for a cocircuit of $M'\setminus f$, but this violates genericity of $M'$.
Similarly, for $\widetilde M'$ not to be a generic lift of $M'$ there should be a cocircuit in $\widetilde M'$ using $g$ and with dependent complement. The complement cannot contain $f$, because genericity of $f$ implies that all dependent sets containing $f$ are spanning. Hence, the cocircuit (deleting $f$ from it) is also a cocircuit in $\widetilde M'\setminus f =\widetilde M$. But this violates genericity of $\widetilde M$
\end{proof}

\begin{remark}
    \label{rem:inseparable}
%\paco{added  remark}
The oriented matroid $\widetilde M'$ in this lemma is not unique, as can be easily understood looking at Figure~\ref{fig:PSA}: one can use instead of $f$ any other pseudo-line intersecting $g$ in the same points as $f$. (The linear programming  
interpretation is that $f$ does not only play the role of an objective function $c$ but of an affine hyperplane $\{c\cdot x = b\}$. Different choices of $b$ produce different oriented matroids $\widetilde M'$, but the same linear program).

Our proof of Theorem~\ref{thm:main_actual} works regardless of the choice of $\widetilde M'$. However, in Section~\ref{sec:activity} we need the following additional property of the $\widetilde M'$ constructed in the proof of the lemma: $f$ and $g$ are (positively) inseparable in it, meaning that there is no signed cocircuit having them in opposite sides.
\end{remark}

\begin{proof}[Proof of Theorem~\ref{thm:main_actual}]
``Uniqueness''. Suppose both $B_1$ and $B_2$ are bases satisfying the condition. Let $C_1,C_2$ be the signed circuits of $M'$ obtained from restricting $\Ocal'_-$ to $B_1\cup f$ and $B_2\cup f$, respectively; let $D_1,D_2$ be the signed cocircuits of $\widetilde{M}$ obtained from restricting $\widetilde{\Ocal}_-$ to $(E\setminus B_1)\cup g$ and $(E\setminus B_2)\cup g$, respectively. Let $\widetilde{M}'$ be the oriented matroid containing both $M'$ and $\widetilde{M}$ as guaranteed by Lemma~\ref{lem:common_ex_lifting} and consider the lifting $\widetilde{C_1}$ of $C_1$ in $\widetilde{M}'$.

Case I: $\widetilde{C_1}(g)=+$. Let $D'_1,D'_2$ be the extensions of $D_1,D_2$ in $\widetilde{M}'$. We must have $D'_1(f)=D'_2(f)=-$ by orthogonality, which in turn forces the lifting  $\widetilde{C_2}$ of $C_2$ to take value $+$ at $g$. Apply the circuit elimination axiom to $\widetilde{C_1}$ and $-\widetilde{C_2}$ and eliminate $f$. Denote by $C$ the resulting signed circuit. We have $\underline{C}\cap\underline{D'_1}\subset (B_2\setminus B_1)\cup g$, but $C$ is conformal with $-D'_1$ over $B_2\setminus B_1$ as $D'_1|_{B_2\setminus B_1}=\Ocal|_{B_2\setminus B_1}=C_2|_{B_2\setminus B_1}$, so $C(g)=D'_1(g)=-$ by orthogonality. However, the same orthogonality argument applied to $C$ and $D'_2$ implies that $C(g)=-D'_2(g)=+$, a contradiction.

Case II: $\widetilde{C_1}(g)=-$. The analysis is similar to Case I.

Case III: $\widetilde{C_1}(g)=0$. This case is also impossible as $\widetilde{C_1}$ cannot be orthogonal to $D'_1$ and $D'_2$.\\

``Existence''. Let $\Ocal\in\rchi(M;\sigma,\sigma^*)$. By reorienting $M$ if necessary, we may assume $\Ocal\equiv +$. For the sake of matching convention in the literature, we also reorient $f,g$ in $\widetilde{M}'$, so the all positive orientation $\Ocal'_+$ of $M'$ is totally cyclic and the all positive orientation $\widetilde{\Ocal}_+$ is acyclic by Lemma~\ref{lem:general_CCMO}. Now we consider the oriented matroid program $\Pcal:=(\widetilde{M}', g, f)$.

This oriented matroid program $\Pcal$ is both feasible and bounded from our assumption on $\widetilde{\Ocal}_+$ and $\Ocal'_+$: $\widetilde{\Ocal}_+$ itself is a (full-dimensional) feasible region of $\widetilde{M}$;
%, which corresponds to a (full-dimensional) feasible region
any positive circuit of $M'$ whose support is of the form $B\cup f$, with $ B\in\Bcal(M)$, is a bounded cone. %provides a bounded cone $B$ containing the feasible region.
By Theorem~\ref{thm:OMP}, $\Pcal$ has an optimal solution $Y$, which is a covector of $\widetilde{M}'$.

By definition, $Y$ is feasible and optimal. Since $Y$ is a covector containing $g$ in $\widetilde{M}'$, we have that $Y\setminus f$ is a covector of $\widetilde{M}$ containing $g$. So $\underline{Y}\setminus f$ contains a cocircuit (of $\widetilde{M}$), whose support is of the form $(E\setminus B_0)\cup g$ for some $B_0\in\Bcal(M)$ by Lemma~\ref{lem:generic_ex_circuit}. If the containment is proper, then $\underline{Y}\setminus f$ contains some cocircuit $\underline{Z_0}$ of $M$. Since the extension $M'$ is generic, the (ordinary matroidal) extension $\underline{Z_0'}$ of $\underline{Z_0}$ in $M'$ contains $f$. Write $Z_0'$ as the signed cocircuit of $M'$ (hence $\widetilde{M}'$) supported on $\underline{Z_0'}$ in which $Z_0'(f)=+$. Now we have a contradiction as $Y\circ Z_0'|_E$ is non-negative. Therefore $\underline{Y}\setminus f=(E\setminus B_0)\cup g$, and it is a cocircuit of $\widetilde{M}$. We claim that $B_0$ is the basis of $M$ we want.

The second assertion is immediate as $Y|_{E\cup g}$ is non-negative. By Lemma~\ref{lem:generic_ex_circuit}, $B_0\cup f$ is a circuit of $M'$. Denote by $C'$ the signed circuit of $M'$ supported on $B_0\cup f$ such that $C'(f)=+$, it remains to show $C'$ is non-negative. Suppose $C'(e)=-$. Let $\underline{Z_e}$ be the fundamental cocircuit of $e$ with respect to $B_0$ in $M$, and let $\underline{Z_e'}$ be its (ordinary matroidal) extension in $\underline{M'}$. Since the extension is generic, $f\in\underline{Z_e'}$.
Let $Z_e'$ be the signed cocircuit of $M'$ (hence $\widetilde{M}'$) supported on $\underline{Z_e'}$ in which $Z_e'(f)=+$. From the choice of $Z_e'$, $\underline{Z_e'}\cap\underline{C'}=\{e,f\}$, so $Z_e'(e)=+$ by orthogonality. In particular, $Y\circ Z_e|_E$ is non-negative, which is a contradiction. Therefore $B_0\cup f$ is a positive circuit of $\Ocal'_+$ as well.
\end{proof}

\begin{proof}[Proof of Theorem~\ref{thm:main}]
By Proposition~\ref{prop:beta_EL_pos} and \ref{prop:EL_pos_CCM}, every orientation in the image of $\beta_{\sigma,\sigma^*}$ is $(\sigma,\sigma^*)$-compatible. Injectivity follows from Proposition~\ref{prop:beta_EL_pos} and the uniqueness part of Theorem~\ref{thm:main_actual}. Surjectivity follows from Proposition~\ref{prop:beta_EL_pos} and the existence part of Theorem~\ref{thm:main_actual}.
\end{proof}

\begin{remark}
While the map described in Theorem~\ref{thm:main} is very simple and combinatorial, describing an efficient combinatorial inverse appears to be a difficult task even for special cases such as graphical matroids.
In fact, our proof of Theorem~\ref{thm:main} shows that computing the inverse map is essentially equivalent to solving generic oriented matroid programs.
%\chiho{To decide whether to keep this comment, esp. compare with the inverse algo. of G--LV}
\end{remark}

We now prove Theorem~\ref{thm:characterization}, saying
that signatures coming from extensions/liftings are the only ones for which Theorem~\ref{thm:main} works.

\begin{proof}[Proof of Theorem~\ref{thm:characterization}]
(1) implying (2) is Theorem~\ref{thm:main}, 
(2) implying (3) is the fact that every lexicographic cocircuit signature is induced by the corresponding lexicographic extension.

%\paco{edited the rest of this proof, for clarity of exposition}
To prove (3) implies (1), we assume that (1) fails and 
use Lemma~\ref{lemma:3elements}(ii). The lemma says that there is a rank 1 minor on three elements where $\sigma$ is not a lifting signature. Such minor is of the form $M/B|_A$ where $A=\{a_1,a_2,a_3\}$ and 
 $B=\{b_1,\dots,b_{r-1}\}$ are disjoint subsets of elements in $M$ such that $B_i:=B\cup a_i$ is a basis for $i=1,2,3$. In particular, $B$ is independent and, for each $i\in\{1,2,3\}$, there is a unique circuit $\underline{C_i}$ contained in $(B\cup A) \setminus \{a_i\}$ and containing $\{a_1,a_2,a_3\}\setminus \{a_i\}$.

 By reorienting the $a_i$ if necessary we assume without loss of generality that the three bases $B_i$ have the same orientation. This implies that 
 %the unique cocircuit $\underline{D}$ contained in $E\setminus B$ has the same sign (say positive) in $a_1$, $a_2$ and $a_3$, 
 for each $i \in \{1,2,3\}$ the two signed circuits supported on $\underline{C_i}$ have opposite signs in the two elements $A\setminus a_i$; let us denote $C_i$ the one that is positive in $a_{i+1}$ and negative in $a_{i-1}$, where $i$ is regarded modulo three.

With these conventions, saying that  $\sigma$
does not come from a single-element lifting of $M/B|_A$ is the same as saying that $\sigma$ chooses $C_i$ as the signed version of $\underline{C_i}$ for the three values of $i$  (or the negation thereof; $\sigma$ chooses $-C_i$ for the three values of $i$, in which case the proof proceeds in a similar manner).
%chooses the same part (positive or negative) in the three circuits $C_1$, $C_2$ and $C_3$. Suppose without loss of generality that $\sigma$ gives positive sign to the three.

Let $\Ocal$ denote the reorientation provided by $\beta_{\sigma,\sigma^*}$ for the basis $B_1$.
We have that $\Ocal(a_{2})=-$ 
(since the fundamental circuit $C(B_1,a_2)$, oriented according to $\sigma$, equals $C_3$, which is positive at $a_1$ and negative at $a_2$)
and $\Ocal(a_3)=+$ (with the same argument on the (signed) fundamental circuit  $C(B_1,a_3)$ which is $C_2$).

It only remains to show that there is a lexicographic cocircuit signature that makes the circuit $\underline{C_1}$ conformal but not compatible with $\Ocal$; that is, with $\Ocal|_{\underline{C_1}}=-C_1$.
%\paco{corrected. It said ``compatible with $\sigma$''}
%$\sigma$ selects the wrong side of $C_1$: we have $C_1(a_2)=+$ and $C_1(a_3)=-$, which is the opposite to what $\Ocal$ has.
We construct such a $\sigma^*$ in what follows. 

None of what we said above changes under reorientation of elements in $B$, so we can assume without loss of generality that $a_2$ is the only positive element in the signed circuit $C_1$.
We then define $\sigma^*$ as the positive lexicographic extension obtained by the elements of $B$ (in an arbitrary order) followed by $a_1$. 
This choice implies that in $\Ocal$ all elements of $B$ get a positive orientation, as $b_i$ is the unique element in $C^*(B_1,b_i)\cap B_1$ for every $i=1,\ldots, r-1$. On the other hand, we said above that $\Ocal|_{\underline{C_1}}$ is negative at $a_2$ and positive at $a_3$, which is the opposite of $C_1$.
This gives $\Ocal|_{\underline{C_1}}=-C_1$ as claimed.
%In particular, the circuit $C_1\subset B\cup \{a_2,a_3\}$ is conformal with $\Ocal$ but has the wrong orientation: in $C_1$ we have that $a_2$ is the only positive element and in $\Ocal|_{\underline{C_1}}$ it is the only negative one.
\end{proof}

\section{Relation with Orientation Activity and Active Bijections} 
\label{sec:activity}

In this section we briefly review the work of Gioan and Las Vergnas on {\em active bijections}, and show how our bijections are related to theirs. We first recall two  notions of activities in (oriented) matroid theory.
For this, we fix an ordering of the ground set $E$ of an oriented matroid $M$. Put differently, $M$ is now an ordered oriented matroid.

\medskip

%A set of {\em lexicographic data} $(<,s)$ of $M$ consists of a total ordering $<$ of $E$ together with a choice of sign $s(e)\in\{+,-\}$ for every element $e$ of $E$. We fix an arbitrary set of such data for the rest of the discussion.

The first notion is the classical {\em Tutte activities}, where an element $e$ is {\em internally (respectively, externally) active} for a basis $B$ if $e\in B$ and it is the minimal element in its fundamental cocircuit (respectively, $e\not\in B$, fundamental circuit).
Denote by $I(B)$ and $E(B)$ the sets of internally and externally active elements of $B$; 
the number $\iota(B):=|I(B)|,\epsilon(B):=|E(B)|$ of internally/externally active elements is the {\em internal/external activity} of $B$.

%the number $\iota(B)$ of internally active elements (respectively, the number $\epsilon(B)$ of externally active elements) is the {\em internal (respectively, external) activity} of $B$.

The second notion is (re)orientation activity of Las Vergnas \cite{LV_Activity}, where an element of $E$ is {\em internally (respectively, externally) active} in an orientation $\Ocal$ if it is the minimal element in some cocircuit (respectively, circuit) conformal with $\Ocal$. Define $I(\Ocal),E(\Ocal),\iota(\Ocal),\epsilon(\Ocal)$ for an orientation $\Ocal$ analogously.
%I(\Ocal),E(\Ocal),

\medskip

%\Paco{Not sure I understand Theorem~\ref{thm:G_LV_bij}. We are describing a map 
%\[
%\beta:\{\text{bases with orientations of their active elements$\} \to \{$orientations}\},
%\]and we implicitly say that it is injective and that $\beta(B)$ has the same active elements and with the same orientation as $B$. But which orientations are in the image...?}

In the following statement we denote  $Y^X$ the set of maps from $X$ to $Y$, and regard $\{+,-\}^E$ as the set of orientations of $M$.

\begin{theorem} \cite{GLV_AB2b} \label{thm:G_LV_bij}
There is an explicit bijection (with explicit inverse)  
\[
\GLV: \left\{(B,\varphi):B\in\Bcal(M),\ \varphi \in \{+,-\}^{I(B)\cup E(B)}\right\}\quad
\longleftrightarrow\quad \{+,-\}^E
\]
satisfying  $I(\GLV(B,\varphi))=I(B)$, \ $ E(\GLV(B,\varphi))=E(B)$, \ and \ $\GLV(B,\varphi)(e)=\varphi(e)$ for all $e\in I(B)\cup E(B)$. 

%that takes a basis $B$, together with a specified orientation for each of its active elements, to an orientation with the same internal/external active elements, having the prescribed orientations.
\end{theorem}

In other words, $\GLV$ is a bijection that preserves active elements and their specified orientations from the basis side to the orientation side.
%\Chiho{To do: (1) Discuss active partition and reduction to $(1,0)$-activity, (2) unwrap definition and explain the 3 bijections in Gioan email, (3) explain the difference between two bijections, (4) revise the original material on active classes.}
%
The bijection in Theorem~\ref{thm:G_LV_bij} has two components. First, it decomposes the matroid into minors such that the restriction of the basis to each minor is a basis and has exactly one active element: this is the {\em active decomposition} of the matroid with respect to the basis \cite{GLV_AB2a}.
With such a decomposition and up to duality, it suffices to describe the bijection under the assumption that $\iota(B)=1,\epsilon(B)=0$. Such bases are known as {\em $(1,0)$-bases} in \cite{GLV_AB1}; their images under the bijections are the {\em $(1,0)$-orientations} that satisfy $\iota(\Ocal)=1,\epsilon(\Ocal)=0$. 
Restricted to the $(1,0)$ case, Theorem~\ref{thm:G_LV_bij} reads:

\begin{corollary} 
%\cite{GLV_AB2b} 
\label{coro:G_LV_bij}
There is a bijection between 
$(1,0)$-bases and (pairs of opposite) $(1,0)$-orientations.
\end{corollary}

The following observation is the key to relate the active bijection to our work.

\begin{lemma} \label{lem:10_basis}
Denote by $g, f$ the first and second element of $M$. Then,
every $(1,0)$-basis  contains $g$ and not $f$. 

If $M$ and is a generic extension of $M\setminus f$ and a generic lift of $M/g$ then the converse holds: every basis containing $g$ and not $f$ is a $(1,0)$-basis.
\end{lemma}

\begin{proof}
$g$ is always an active element, so $\epsilon(\Ocal)=0$ implies $g\in B$. Once we know this, $f\in B$ would imply $\iota(\Ocal)\ge 2$ because both $f$ and $g$ would be internally active, so we must have $f\not\in B$.

Now, assume genericity and let $B$ be a basis containing $g$ and not $f$. Genericity of $g$ implies that every circuit not containing $g$ is spanning. Hence, every fundamental circuit $C(B,e)$ contains $g$, and no external element is active.
Genericity of $f$ implies that every cocircuit not containing $f$ has independent complement. Hence, every fundamental cocircuit $C(B,e)$ contains $f$, and $g$ is the only internally active element.
\end{proof}

So, from now on let us denote by $g,f$ the first and the second element of an oriented matroid $M$ and assume that $M$ is a generic extension of $M\setminus f$ and a generic lift of $M/g$. Observe that if we call $M_0:=M/g\setminus f$ we are in the situation of Section \ref{sec:main} with the oriented matroids $M$ and $\widetilde M'$ of that section now denoted $M_0$ and $M$. The following statement shows that the sets of bases that appear in 
Theorem~\ref{thm:main} and Corollary~\ref{coro:G_LV_bij} biject to one another.

\begin{lemma} \label{lem:basis_corr}
There is a bijection between the bases of $M_0$, cocircuits of $M\setminus f$ that contain $g$, and $(1,0)$-bases of $M$, via $B_0\leftrightarrow(E_0\setminus B_0)\cup g\leftrightarrow B_0\cup g$.
\end{lemma}

\begin{proof}
The correspondence between the first two families is the statement of Lemma~\ref{lem:generic_ex_circuit}.
On the other hand, $(1,0)$-bases of $M$ are the bases containing $g$ and not $f$, which biject (by removing/inserting $g$) to bases of $M_0:=M/g\setminus f$.
%
%By the proof of Lemma~\ref{lem:generic_ex_circuit} (or the dual case thereof), $B:=B_0\cup g$ is a basis of $M\setminus f$ hence of $M$, which we claim that it is of $(1,0)$-activity.
%    
%Let $e\not\in B, e\neq f$ be an element of $M$.
%The fundamental circuit $C$ of $e$ with respect to $B$ (in $M$) is the lifting of the fundamental circuit $C_0$ of $e$ with respect to $B_0$ (in $M_0$), so $C$ contains $g$ by genericity, and $e$ is not active with respect to $B$.
%Similarly, the fundamental circuit $C$ of $f$ with respect to $B$ (in $M$) is the lifting of $B_0\cup f$ from $M/g$ to $M$, which also contains $g$ by the strong genericity assumption.
%Finally, for every $e\in B_0$, the fundamental cocircuit of $e$ with respect to $B$ contains $f$ as the fundamental circuit of $f$ with respect to $B$ contains $B_0$, so $e$ is not active either.
%Conversely, every $(1,0)$-basis $B$ of $M$ contains $g$ by Lemma~\ref{lem:10_basis}, so $B\setminus g$ is a basis of $M/g$, hence of $M_0$ as $f\not\in B$.
\end{proof}

Let us now look at the other side, that of orientations.
We say that an acyclic orientation ({equivalently, a \em tope}) of $M$ (or $M\setminus f$) is {\em bounded} by $g$ if every cocircuit conformal with it contains $g$.
%\chiho{Probably move to an earlier section, or at least make more coherent.}
The idea behind the ``$(1,0)$'' case of the active bijection is to interpret the $(1,0)$-orientations of $M$ as topes bounded by $g$, and to relate the $(1,0)$-bases of $M$ with signed cocircuits containing $g$, then to match them with each other by an oriented matroid programming procedure involving (but not limited to) $f$ as the objective function.

In general, the optimal solutions with respect to $f$ are not cocircuits, and further tie-breaking is needed.
However, when $g$ and $f$ are  generic, the procedure is closely related to our setting, as we now elaborate.
Besides genericity, we assume that $f$ and $g$ are positively inseparable in $M$; that is, whenever a tope is bounded by $g$, it is on the same side of $f$ as of $g$. As mentioned in Remark~\ref{rem:inseparable}, the oriented matroid constructed in the proof of Lemma~\ref{lem:common_ex_lifting} has all these properties.

In the following statement we denote $\sigma$ and $\sigma^*$ the generic circuit and cocircuit signatures producing $M\setminus f$ and $M/g$ as a lifting, respectively extension, of $M_0$.

\begin{lemma} \label{lem:orientation_corr}
There is a bijection between the $(1,0)$-orientations of $M$ on the negative side of $g$ and the set $\rchi(M_0;\sigma,\sigma^*)$ of $(\sigma,\sigma^*)$-compatible orientations of $M$, via $\Ocal\leftrightarrow\Ocal/g\setminus f$.
\end{lemma}

\begin{proof}
Suppose $\Ocal$ is a $(1,0)$-orientation of $M$ on the negative side of $g$, by the inseparable assumption, $\Ocal(f)=-$ as well.  Since $\Ocal$ has zero externally activity, it is acyclic, the same can be then said for $\Ocal\setminus f$, so by Lemma~\ref{lem:general_CCMO}, $\Ocal/g\setminus f$ is $\sigma$-compatible.
If a signed cocircuit $D$ is conformal with $\Ocal/g\setminus f$ but not $\sigma^*$, 
then by the definition of $\sigma^*$, $(D\ -)$ is conformal with $\Ocal/g$ thus with $\Ocal$, in which its smallest element is an internal active element other than $g$.

Conversely, starting with a $(\sigma,\sigma^*)$-compatible orientation $\Ocal_0$ of $M_0$, consider the orientation $\Ocal$ of $M$ obtained from setting $\Ocal(f)=\Ocal(g)=-$, and is equal to $\Ocal_0$ over $E_0$.
If $\Ocal$ is conformal with a cocircuit $D$ that does not contain $g$, then $\Ocal/g$ is also conformal with $D$, contradicting Lemma~\ref{lem:general_CCMO}, so its internal activity is $1$.
Again by Lemma~\ref{lem:general_CCMO}, $\Ocal\setminus f$ is acyclic and has zero external activity.
Therefore $\Ocal$ is of $(1,0)$-activity and is by construction on the negative side of $g$.
\end{proof}

Summing up, Lemmas~\ref{lem:basis_corr} and \ref{lem:orientation_corr} say that the bijection of Theorem~\ref{thm:main} (for the oriented matroid $M_0$) ``is the same'' as the bijection of Corollary~\ref{coro:G_LV_bij} (for $M$), which in turn is the bijection of Theorem~\ref{thm:G_LV_bij} restricted to the $(1,0)$ case and with the genericity assumptions on the first and second elements of $M$.

%Hence, starting with an oriented matroid $M_0$ together with a circuit (respectively, corcircuit) signature induced by generic single-element lifting $\widetilde{M_0}:=M_0\cup g$ (respecitvely, extension $M_0':=M_0\cup f$), one could alternatively prove Theorem~\ref{thm:main} by constructing $M$ as in Lemma~\ref{lem:common_ex_lifting} that satisfies the strong genericity assumption, order the ground set of $M$ such that $g$ and $f$ are the smallest and second smallest, and apply the  active bijection together with the correspondences in Lemma~\ref{lem:basis_corr} and Lemma~\ref{lem:orientation_corr}.
%Recall that the existence of such $M$ is stronger than the proof in Section~\ref{sec:main} needed.
%Conversely, if the smallest and second smallest elements $g,f$ of an ordered oriented matroid $M$ are sufficiently generic, in the sense the no tie-break is needed in the oriented matroid program, then the $(1,0)$-case of active bijections can be reduced to Theorem~\ref{thm:main}.

\medskip
In \cite[Section 3]{GLV_Uniform} Gioan and Las Vergnas give a simplified proof of  the active bijection for the case where $M$ is uniform, a case in which no tie-breaking is needed. Although our point of view is different, our proof (once the existence of the oriented matroid $\widetilde M'$ of Section 3 is established) is in fact quite similar to their uniform case proof. For example, the circuit/cocircuit arguments that we use in the ``existence'' part of the proof of Theorem~\ref{thm:main_actual} are  close to \cite[Lemma 3.2.3]{GLV_Uniform}.
%, i.e., the oriented matroid program is {\em non-degenerate} \cite[Definition~10.1.3 (4)]{BLSWZ_book}.
However, uniformity of $M$ is much more restrictive than the  assumption in our proof, genericity of the first two elements as a lifting and an extension respectively, a situation that we can call ``uniformoid''. In particular, our Theorem~\ref{thm:main} can be understood as on the one hand extending \cite{GLV_Uniform} to this ``uniformoid'' case and on the other hand showing that any oriented matroid $M_0$ can be embedded in an uniformoid $M$ so that the active bijection for $M$ gives, in $M_0$, a bijection between bases and $(\sigma, \sigma^*)$-compatible orientations.

%Despite the resemblance of argument,
%\paco{I am tempted to remove this paragraph}
%comparing our bijections and Gioan--Las Vergnas active bijections on the \emph{same} oriented matroid (hence not allowing the enlarging construction applied above), the two bijections are rather different.
%Active bijections take an ordering of the ground set as parameter, while our bijections can take more general parameter, thus the description of the bijective algorithms are different: the former orients elements in an iterative manner whereas ours can orient elements simultaneously.
%Moreover, active bijections are distinctively activity preserving, while ours are not in general, even if the signatures are induced by lexicographic data.

%\Chiho{Shortening the rest and refer the reader to G-LV works.}

\medskip

We conclude this section with a discussion of {\em activity classes} of an oriented matroid $M$.
An intrinsic definition can be given using the {\em active decomposition} of an orientation \cite{GLV_AB2b}, a counterpart of the aforementioned active decomposition with respect to a basis.
However, for the sake of brevity, we define an activity class to be the collection of 
$2^{\iota(B)+ \epsilon(B)}$
orientations 
\[
\big\{\GLV(B,\varphi):
%B \in \Bcal(M),\ 
\varphi:I(B)\cup E(B)\rightarrow\{+,-\}\big\}
\]
for a basis $B$.
In this way, the active bijection can be viewed as a bijection between bases and activity classes of $M$, and thus activity classes  play a similar role for active bijections as $(\sigma,\sigma^*)$-compatible orientations play for ours.
This illustrates another difference between the two bijections: we are not aware of an analogue of such notion for general $(\sigma,\sigma^*)$ in our setting.

%Let $e_{1}<\ldots<e_{\iota}$ (respectively, $e'_{1}<\ldots<e'_{\epsilon}$) be the elements that are internally (respectively, externally) active in $\Ocal$. For $k=1,2,\ldots,\iota$, let $F_k$ be the union of (the supports of) all signed cocircuits conformal with $\Ocal$ whose minimal elements are at least $e_k$; dually, for $k=1,2,\ldots,\epsilon$, let $F'_k$ be the union of (the supports of) all signed circuits conformal with $\Ocal$ whose minimal elements are at least $e'_k$. The partition $\mathcal{F}=(F_{\iota}, F_{\iota-1}\setminus F_{\iota},\ldots, F_1\setminus F_2; F'_{\epsilon},F'_{\epsilon-1}\setminus F'_{\epsilon},\ldots, F'_1\setminus F'_2)$ of $E$ is the {\em active partition} of $\Ocal$. The {\em activity class} of an orientation is the set of orientations obtained from reversing any union of components from $\mathcal{F}$, which is a well-defined equivalence relation on the set of all orientations of $M$ \cite{GLV_AB2a}.
%Another way to interpret active bijections is then bijections between the bases and activity classes of $M$, in the sense that different choices of orientations of the active elements of a basis produce orientations in the same class and vice versa.
%It can be proven that any two orientations in an activity class share the same active partition (hence the same internal and external activities) \cite{GLV_AB2a}, so activity classes are well-defined and they partition the set of orientations of $M$.

%We are not aware of an analogue of such classes for general $(\sigma,\sigma^*)$ in our setting. 
Nonetheless, we describe an instance where our bijections can deduce something about activity.
Given a pair $(\sigma,\sigma^*)$ induced by the lifting and extension with respect to the same lexicographic data, a $(\sigma,\sigma^*)$-compatible orientation is called a {\em circuit-cocircuit minimal orientation} in \cite{Bac_PO} and an {\em active fixed and dual-active fixed (re)orientation} in \cite{GLV_AB2b}. We have the following observation relating these compatible orientations and activity classes.

%On the other hand, a set of lexicographic data induces a circuit signature $\sigma_{(<,s)}$ (a dual construction gives a cocircuit signature): let $\underline{C}$ be a circuit of $\underline{M}$, and let $e$ be the minimal element in $C$ with respect to $<$, then we set $\sigma_{(<,s)}(\underline{C})$ to be the unique signed circuit $C$ supported on $\underline{C}$ such that $C(e)=s(e)$. The lifting (respectively, extension) of $M$ given by that circuit (respectively, cocircuit) signature is the {\em lexicographic extension (respectively, lifting)} induced by that lexicographic data. If $\sigma$ and $\sigma^*$ are circuit and cocircuit signatures induced by the same lexicographic data, then a $(\sigma,\sigma^*)$-compatible orientation is called a {\em circuit-cocircuit minimal orientation} in \cite{Bac_PO} and an {\em active fixed and dual-active fixed (re)orientation} in \cite{GLV_AB2b}. We have the following simple observation relating these compatible orientations and activity classes.

\begin{proposition}
Let $\sigma$ and $\sigma^*$ be as above. Then 
$\rchi(M;\sigma,\sigma^*)$ is a system of representatives of the activity classes of $M$.
\end{proposition}

\begin{proof}
An orientation $\Ocal$ is $(\sigma,\sigma^*)$-compatible if and only if every circuit or cocircuit conformal with $\Ocal$ is oriented according to the reference orientation of its minimal element, if and only if every active element of $\Ocal$ is oriented according to its reference orientation.
By Theorem~\ref{thm:G_LV_bij}, exactly one orientation within an active class has such a property.
%Within an activity class, every component of the active partition of any (hence all) orientation contains exactly one active element, so there is a unique choice of reversal for each component to guarantee such element is oriented as the data.
%Since active elements are by the definition the minimum elements of any conformal circuits and cocircuits, precisely one orientation within the class is $(\sigma,\sigma^*)$-compatible.
\end{proof}

\section{Relation to Triangulations of Lawrence Polytopes}
\label{sec:lawrence}

\begin{definition}
Let $M$ be an oriented matroid on ground set $E$ and rank $r$.
The {\em Lawrence polytope} or {\em Lawrence lifting} $\Lambda(M)$ of $M$ is an oriented matroid on the ground set $\Ecal=E\sqcup \overline{E}$, where $\overline{E}=\{\overline{e}:e\in E\}$, of rank $|E|+r$ and with the following set of signed circuits:
\[
\Ccal(\Lambda(M)):=
\{\,(C^+\cup\overline{C^-}\, ,\, \overline{C^+}\cup C^-) :
(C^+,C^-) \in \Ccal(M)\,\}.
\]
That is to say, each element $\overline e \in \overline{E}$ is 
``co-antiparallel'' (antiparallel in the dual) to the corresponding $e\in E$.
\end{definition}

Every sign vector $E\rightarrow\{+,-,0\}$ (such as a signed (co)circuit or an orientation) can be interpreted as a subset of $\Ecal$ which includes $e$ (respectively, $\overline{e}$) if $e$ is positive (respectively, negative) in the sign vector.
Moreover, given a generic circuit signature $\sigma$ and a basis $B$, we can construct a subset\footnote{In \cite{Ding_GB}, such a subset of $\Ecal$ is thought as a {\em fourientation} of $M$, a notion introduced by the first author and Sam Hopkins in \cite{BH_Four}.} $B^\sigma\subset\Ecal$ as follows: include both $e$ and $\overline{e}$ if $e\in B$, otherwise include $e$ (respectively, $\overline{e}$) if $e$ is positive (respectively, negative) in $\sigma(C(B,e))$.
Observe that such a $B^\sigma$ is a basis of $\Lambda(M)$ and, conversely, every basis can be obtained this way (see Proposition~\ref{prop:triang-Lawrence}).

%We can generalize the notion of compatibility by saying a signed (co)circuit is conformal with an subset of $\Ecal$ if the subset contains the signed (co)circuit, viewed as a subset of $\Ecal$ itself.
A cocircuit signature $\sigma^*$ is a circuit signature of $M^*$, and we can similarly construct $(E\setminus B)^{\sigma^*}\subset\Ecal$ for every basis $B$ of $M$.
By duality, $(E\setminus B)^{\sigma^*}$ is a basis of $\Lambda(M^*)$ (not to be confused with $\Lambda(M)^*$).

Ding introduced the following notion in \cite{Ding_GB2} for regular matroids, which can be steadily extended to all oriented matroids.
Let us mention that, as explained in the introduction, \cite{Ding_GB2} goes beyond the setting of triangulations.

\begin{definition} 
A circuit signature $\sigma$ of $M$ is {\em triangulating} if for any $B\in\Bcal(M)$, $B^\sigma$ does not contain any signed circuit (of $M$) of the form $-\sigma(\underline{C})$, interpreted as a subset of $\Lambda(M)$.
%\paco{what is $C$ in this definition?}
A triangulating cocircuit signature can be defined analogously.
\end{definition}

The orientation $\beta_{\sigma,\sigma^*}(B)$, interpreted as a subset of $\Ecal$, is then equal to $B^\sigma\cap (E\setminus B)^{\sigma^*}$. Ding proved that for a pair of triangulating circuit and cocircuit signatures $(\sigma,\sigma^*)$ of a regular matroid $M$, the map $B\mapsto B^\sigma\cap (E\setminus B)^{\sigma^*}$ is a bijection between $\Bcal(M)$ and $\rchi(M;\sigma,\sigma^*)$ \cite[Theorem~1.20]{Ding_GB2}.
We here show that this result is the special case of Theorem~\ref{thm:main} for regular matroids.  The proof in \cite{Ding_GB2} can be extended to show that $\beta_{\sigma,\sigma^*}$ still takes bases to $(\sigma,\sigma^*)$-compatible orientations and is injective for general oriented matroids, but \cite{Ding_GB2} needed to use the {\em circuit-cocircuit reversal system}, which is only available for regular matroids, as an intermediate object to deduce surjectivity. %(namely, to show the map is injective, and that $|\rchi(M;\sigma,\sigma^*)|=|\Bcal(M)|$)

\begin{proposition} \label{prop:tri_eq_GSE}
A circuit signature is triangulating if and only if it is induced by a generic single-element lifting.
Dually, a cocircuit signature is triangulating if and only if it is induced by a generic single-element extension.
\end{proposition}

\begin{proof}
Let $\sigma$ be a triangulating circuit signature, we verify (3) of Theorem~\ref{thm:characterization}. Let $\sigma^*$ be a cocircuit signature induced by some lexicographic extension. Suppose for some basis $B$ of $M$, $\beta_{\sigma,\sigma^*}(B)$ is compatible with some $-\sigma(\underline{C})$, i.e., $B_2^\sigma\cap (E\setminus B_2)^{\sigma^*}$ contains $-\sigma(\underline{C})$.
Then $B_2^\sigma$ itself contains $-\sigma(\underline{C})$.
%, and in turn $B_2^{-\sigma}$ contains $\sigma(C)$.
%Pick a basis $B_1$ such that $C$ is a fundamental circuit thereof, $B_1^\sigma$ must contain $\sigma(C)$ by construction (and since it does not contain $-\sigma(C)$, $B_1\neq B_2$).
%Summarizing, we have $B_1^\sigma\cap B_2^{-\sigma}$ contains $\sigma(C)$, a contradiction.

Now let $\sigma$ be a circuit signature induced by some generic single-element lifting.
Suppose $B^\sigma$ contains some $-\sigma(\underline{C})$ for some bases $B$.
Consider the lexicographic cocircuit signature $\sigma^*$ with respect to an ordering of $E$ whose elements in $B\cap \underline{C}$ are the smallest, and the reference orientation of those elements are the same as in $-\sigma(\underline{C})$, we have $(E\setminus B)^{\sigma^*}$ contains $-\sigma(\underline{C})$, so $\beta_{\sigma,\sigma^*}(B)=B^\sigma\cap (E\setminus B)^{\sigma^*}$ is compatible with $-\sigma(\underline{C})$ as well, a contradiction.
\end{proof}

When an oriented matroid is realizable, its Lawrence polytope can be realized by the vertex set of an actual polytope in Euclidean space \cite[Proposition~9.3.2]{BLSWZ_book}, which is often known as the Lawrence polytope as well.
The name triangulating signature comes from the following result of Ding:

\begin{theorem}[\protect{\cite[Theorem~1.28]{Ding_GB2}}]
Let $M$ be a regular matroid and $\sigma$ a circuit signature in it.
Then, $\{\conv{B^\sigma}:B\in\Bcal(M)\}$ is a triangulation of the Lawrence polytope if and only if $\sigma$ is triangulating.
\end{theorem}

The notion of triangulation can be generalized for any oriented matroid \cite{Santos_book}, and Ding's result can also be extended; this is the main content of the rest of this section.

\begin{definition}
A {\em triangulation} of an oriented matroid $M$ is a collection of bases $B_1,\ldots, B_t\in \Bcal(M) $ that satisfies:
\begin{enumerate}
    \item Pseudo-manifold property: for any $B_i$ and $e\in B_i$, $B_i\setminus e$ is either contained in another $B_j$, or its complement contains a positive cocircuit of $M$.
    \item Non-overlapping property: there do not exist $B_i,B_j$ and a signed circuit $C=(C^+,C^-)$ such that $C^+\subset B_i$ and $\underline{C}\setminus a\subset B_j$ for some $a\in C^+$.
\end{enumerate}
\end{definition}

We have the following description of triangulations of the Lawrence polytope of an oriented matroid.

\begin{proposition} \cite[Lemma~4.11 \& Proposition~4.12]{Santos_book}
\label{prop:triang-Lawrence}
Let $M$ be an oriented matroid.
Then the bases of $\Lambda(M)$ are precisely those of the form $B^A:=B\cup\overline{B}\cup A\cup \overline{E\setminus(A\cup B)}$ for some basis $B$ of $M$ and $A\subset E\setminus B$.

Furthermore, any triangulation of $\Lambda(M)$ contains exactly one basis of the form $B^A$ for each basis $B$ of $M$.\footnote{In particular, all the triangulations of $\Lambda(M)$ have the same number of facets, equal to the number of bases of $M$}
\end{proposition}

Therefore, as mentioned at the beginning of the section, the $B^\sigma$'s constructed from a circuit signature $\sigma$ are bases of $\Lambda(M)$.
Moreover, the definition of triangulating signature (see also the notion of {\em a triangulating atlas} as in Definition~1.5(2) of \cite{Ding_GB2}) is a reformulation of the non-overlapping property.
%We only prove one side of implication because the converse is analogous, and more importantly it follows from Corollary~\ref{coro:tri_eq_tri} with no circular argument anyway.

\begin{lemma} \label{lem:nonOC_eq_tri}
%Let $\sigma$ be a triangulating circuit signature.
%Then $\{B^\sigma:B\in\Bcal(M)\}$ is non-overlapping.
%Conversely, 
Suppose a collection $\{B^{A_B}:B\in\Bcal(M), A_B\subset E\setminus B\}$ is non-overlapping.
Then there exists a triangulating circuit signature $\sigma$ such that $B^\sigma=B^{A_B}$ for every $B$.
Conversely, every triangulating circuit signature $\sigma$ gives rise to a non-overlapping collection $\{B^\sigma:B\in\Bcal(M)\}$.
\end{lemma}

\begin{proof}
Suppose $B_1,B_2\in\Bcal(M)$ share the same circuit $\underline{C}$ as fundamental circuit, but $B_1^{A_{B_1}}$ contains the signed circuit $(C^+,C^-)$ supported on $\underline{C}$ while $B_2^{A_{B_2}}$ contains $(C^-,C^+)$.
Then $B_1^{A_{B_1}},B_2^{A_{B_2}}$ overlap on the signed circuit $(C^+\cup\overline{C^-},\overline{C^+}\cup C^-)$ with the element $a$ being the unique element in $(C^+\cup\overline{C^-})\setminus (B_2\cup\overline{B_2})$, a contradiction.
Therefore, there exists a unique way to choose a signed circuits $\sigma(\underline{C})$ supported on $\underline{C}$ such that whenever $B$ has $\underline{C}$ as a fundamental circuit, $B^{A_B}$ contains $\sigma(\underline{C})$, this defines a circuit signature.
If $\sigma$ is not triangulating and $B_1^\sigma$ contains $-\sigma(\underline{C})$, pick a basis $B_2$ such that $\underline{C}$ is a fundamental circuit thereof, so $B_2^\sigma$ contains $\sigma(\underline{C})$ from construction.
A similar argument as above shows that $B_1^\sigma,B_2^\sigma$ overlap on the signed circuit of $\Lambda(M)$ coming from $-\sigma(\underline{C})$.

Conversely, suppose $\sigma$ is triangulating, but $B_1^\sigma, B_2^\sigma$ overlap on a signed circuit $(C^+\cup\overline{C^-},\overline{C^+}\cup C^-)$ of $\Lambda(M)$ that comes from the signed circuit $C=(C^+,C^-)$ of $M$.
Without loss of generality, the element $a$ in the definition of non-overlapping property is from $C^+\subset E$, and we abuse notation to view $a$ as an element of $M$.
Since $\{e,\overline{e}\}\subset B_2^\sigma$ for every $e\in \underline{C}\setminus a$, we have $\underline{C}\setminus a\subset B_2$, and $\underline{C}$ is necessarily the fundamental circuit of $a$ with respect to $B_2$.
By the construction of $B_2^\sigma$, the sign of $a$ in $\sigma(\underline{C})$ must be negative as $\overline{a}\in B_2^\sigma$.
But then $B_1^\sigma$ contains $C^+\cup\overline{C^-}\ni a$, which is the signed circuit $-\sigma(\underline{C})$ of $M$, a contradiction.
\end{proof}

The following statement is a generalization of \cite[Theorem~1.28]{Ding_GB2}, and can be thought as a converse of \cite[Proposition~4.12]{Santos_book} in the sense that ``correct number of simplices plus non-overlapping'' implies ``triangulation''. In
\cite{Ding_GB2} this result is proven for regular matroids, where the crucial step is a volume computation to show that the total volume of the simplices corresponding to the $B^\sigma$'s is equal to the volume of the Lawrence polytope (realized as a genuine polytope), hence the non-overlapping property implies that the union of these simplices is the Lawrence polytope itself. Such an argument does not work in the non-realized setting, since there is no natural notion of ``volume''.

\begin{proposition} \label{prop:tri_eq_tri}
For each basis $B$ consider an $A_B \subset E\setminus B$. If the collection
$\{B^{A_B}:B\in\Bcal(M), A_B\subset E\setminus B\}$ satisfies the non-overlapping property, then
it is a triangulation of $\Lambda(M)$.
\end{proposition}

\begin{proof}
By Lemma~\ref{lem:nonOC_eq_tri}, there exists a triangulating circuit signature $\sigma$ that yields the collection.
Such a signature is induced from some generic single-element lifting by Proposition~\ref{prop:tri_eq_GSE}, so the collection forms a triangulation of $\Lambda(M)$ by \cite[Theorem~4.14]{Santos_book}.
\end{proof}

\begin{remark}
It is possible to prove Proposition~\ref{prop:tri_eq_tri} by verifying the pseudo-manifold property directly, which in turns gives a new proof of the generic case of the ``single-element liftings yields subdivisions of $\Lambda(M)$'' direction in \cite[Theorem~4.14(i)]{Santos_book}.

Conversely, since the collection of bases of a triangulation of $\Lambda(M)$ is non-overlapping, it is induced by a triangulating circuit signature, which comes from a generic single-element lifting.
This already gives a new proof of the generic case of the reverse direction of \cite[Theorem~4.14(i)]{Santos_book}.
By the Cayley Trick \cite{HRS-Cayley}, for realizable oriented matroids it also gives a proof to the generic case of the Bohne--Dress Theorem, stating that all zonotopal tilings of a zonotope $Z(M)$ come from single-element liftings of the oriented matroid $M$ and vice versa.
%\Chiho{Can we say the proof is new, and is there some trick to deduce the full theorem(s) from the generic case?}
\end{remark}

\section*{Acknowledgements}

%\chiho{Update acknowledgement, do we need to keep the old ones?}

Work of S. Backman is supported by a Zuckerman STEM Postdoctoral Scholarship , DFG--Collaborative Research Center, TRR 109 ``Discretization in Geometry and Dynamics'', a Simons Collaboration Gift \# 854037, and NSF Grant (DMS-2246967). Work of F. Santos is supported by grants  PID2019-106188GB-I00,   PID2022-137283NB-C21  of MCIN/AEI/10.13039/501100011033 and by pro\-ject CLaPPo (21.SI03.64658) of Universidad de Cantabria and Banco Santander.
%\paco{updated my grants}
%
%MTM2017-83750-P of the Spanish Ministry of Science and grant EVF-2015-230 of the Einstein Foundation Berlin, as well as the Clay Institute and the National Science Foundation (Grant No. DMS-1440140) while he was in residence at MSRI Berkeley, California during the Fall 2017 semester ``Geometric and Topological Combinatorics''. 
Work of C.H. Yuen was supported by the Trond Mohn Foundation project ``Algebraic and Topological Cycles in Complex and Tropical Geometries''; he also acknowledges the support of the Centre for Advanced Study (CAS) in Oslo, Norway, which funded and hosted the Young CAS research project ``Real Structures in Discrete, Algebraic, Symplectic, and Tropical Geometries'' during the 2021/2022 and 2022/2023 academic years. %Netherlands Organisation for Scientific Research Vici grant 639.033.514 and % during his affiliation to University of Bern and University of Oslo, respectively
The authors thank Emeric Gioan for explaining his work with Las Vergnas and their connection with our work, and Changxin Ding for explaining his work on triangulations of Lawrence polytopes and bijections.

\bibliographystyle{plain}

\bibliography{TopoBij}

\medskip

Department of Mathematics and Statistics, University of Vermont, Burlington, VT 05405, USA

Email address: \url{spencer.backman@uvm.edu}\\

Department of Mathematics, Statistics and Computer Science, University of Cantabria, Spain

Email address: \url{francisco.santos@unican.es}\\

Department of Mathematical Sciences, University of Copenhagen, Universitetsparken 5, 2100 Copenhagen, Denmark

Email address: \url{chy@math.ku.dk}

\end{document}